\DeclareMathAlphabet{\mathpzc}{OT1}{pzc}{m}{it}
\def\blfootnote{\xdef\@thefnmark{}\@footnotetext}
\title{The tangent space of the punctual Hilbert scheme\vspace{-2ex}}
\author{Dori Bejleri and David Stapleton\vspace{-2ex}}
\newtheorem{theorem}{Theorem}
\newtheorem{corollary}[theorem]{Corollary}
\newtheorem{lemma}[theorem]{Lemma}
\newtheorem{proposition}[theorem]{Proposition}
\newcommand{\hr}[2]{\hyperref[#1]{#2}}
\newtheorem*{maintheorem}{Theorem A}
\newtheorem*{maincomputation}{Theorem B}
\theoremstyle{definition}
\newtheorem{remark}[theorem]{Remark}
\newtheorem{definition}[theorem]{Definition}
\newcommand{\mb}[1]{\mathbb{#1}}
\newcommand{\mc}[1]{\mathcal{#1}}
\newcommand{\Hom}{\operatorname{Hom}}
\newcommand{\Tr}{\operatorname{Tr}}
\newcommand{\ra}{\rightarrow}
\newcommand{\im}{\operatorname{im}}
\def\AA{{\mathbb A}}
\def\NN{{\mathbb N}}
\def\CC{{\mathbb C}}
\def\PP{{\mathbb P}}
\def\fm{{\mathfrak m}}
\def\SymG{{\mathfrak{S}_n}}
\def\Symtwo{{\mathfrak{S}_2}}
\def\Hns{{S^{[n]}}}
\def\Hnc{{(\CC^2)^{[n]}}}
\def\Sns{{S^{(n)}}}
\def\Snc{{(\CC^2)^{(n)}}}
\def\ocn{{(\Oc_{\CC^2})^{[n]}}}
\def\Enl{{\Ec^{[n]}}}
\def\tnl{{(T_S)^{[n]}}}
\def\tnc{{(T_{\CC^2})^{[n]}}}
\def\Zcaln{{\mathcal{Z}_n}}
\def\Homc{{\mathcal{H}om}}
\def\cotanc{{T^*\PP^1}}
\def\aone{{\CC^2/(\pm 1)}}
\def\derc{{\mathrm{Der}_\CC(-\mathrm{log}E)}}
\def\dern{{\mathrm{Der}_\CC(-\mathrm{log}B_n)}}
\def\derp{{\mathrm{Der}_\CC(-\mathrm{log}E')}}
\def\ddx{{\frac{\delta}{\delta x}}}
\def\ddy{{\frac{\delta}{\delta y}}}
\def\dim{{\mathrm{dim}}}
\def\rk{{\mathrm{rank}}}
\def\crk{{\mathrm{corank}}}
\def\Oc{{\mathcal O}}
\def\Ec{{\mathcal E}}
\def\Fc{{\mathcal F}}
\begin{document}
\maketitle

\section*{Introduction}
\thispagestyle{empty}

The purpose of this paper is to study the Zariski tangent space of the punctual Hilbert scheme parametrizing subschemes of a smooth surface which are supported at a single point. We give a lower bound on the dimension of the tangent space in general and show the bound is sharp for subschemes of the affine plane cut out by monomials.

Let S be a smooth connected complex surface, and denote by $\Hns$ the Hilbert scheme parametrizing length $n$ subschemes of $S$. Fogarty \cite{fogarty} showed $\Hns$ is smooth and irreducible. We write $\Sns$ for the symmetric power of $S$. The Hilbert-Chow morphism:
$$
h:\Hns \ra \Sns,
$$ which sends a length $n$ subscheme to its cycle, is invaluable in the study of $\Hns$. We denote by $P_n$ the \emph{nth punctual Hilbert scheme} which is the reduced fiber of $h$ over a multiplicity $n$ cycle in $\Sns$. Thus $P_n$ parametrizes length $n$ subschemes supported at 1 point. Note that $P_n$ is the same for any smooth surface so throughout we assume $S \cong \CC^2$.

The Hilbert-Chow morphism and the punctual Hilbert scheme have attracted a great deal of attention. Beauville ~\cite{Beauville} has shown that if $S$ is a K3 surface then $\Hns$ is a projective holomorphic symplectic variety (one of few known examples). Mukai ~\cite{Mukai} gave a description of the symplectic form in terms of the pairing on $\operatorname{Ext}^1(I,I)$. For general surfaces, $h$ gives a crepant resolution. Brian\c{c}on ~\cite{Briancon} has shown that $P_n$ is irreducible, and  Haiman ~\cite{haiman} has shown that $P_n$ is the scheme-theoretic fiber of $h$ and that $P_n$ is actually a local complete intersection scheme. The Betti numbers of the punctual Hilbert scheme were computed by Ellingsrud and Stromme ~\cite{EllingStromme}. Iarrobino \cite{Iarrobino} showed the Hilbert scheme of length $n$ subschemes of $\AA^k$ is reducible when $k\ge 3$ and $n$ is large, and Erman \cite{Erman} showed that these Hilbert schemes can acquire arbitrary singularities. Huibregtse ~\cite{Huibregtse1,Huibregtse2} studied questions of irreducibility and smoothness of a variety related to $P_n$ which consists of subschemes of $\Hns$ whose sum in the Albanese variety of $S$ is constant.

There is a natural tautological vector bundle $\tnl$ on $\Hns$ whose fiber at a point corresponding to the length $n$ subscheme $\xi \subset S$ is the $2n$-dimensional vector space $H^0(S,T_S|_\xi)$. In \cite{Stapleton}, the second author showed there is a natural injection of sheaves
$$
\alpha_n: \tnl \ra T_\Hns
$$
and that $\tnl$ is the log-tangent sheaf of the exceptional divisor of $h$. Thus it is natural to expect the degeneracy loci of $\alpha_n$ are connected to the singularities of the exceptional divisor of $h$. To make this precise we relate the rank of $\alpha_n$ to the dimension of the Zariski tangent space of the punctual Hilbert scheme.

\begin{maintheorem}\label{MT}
If $\xi\subset \CC^2$ is a length $n$ subscheme supported at the origin, then $$\dim(T_{[\xi]}P_n) \ge 2n - \rk(\alpha_n|_{[\xi]}) = \crk(\alpha_n|_{[\xi]}).$$ Moreover equality holds when the ideal of $\xi$ is generated by monomials.
\end{maintheorem}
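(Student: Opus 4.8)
The plan is to translate the statement into a rank bound for the differential of the Hilbert--Chow morphism and then play it off against the holomorphic symplectic form on $\Hnc$. By Haiman's theorem quoted above, $P_n$ is the scheme-theoretic fibre of $h$ over the cycle $n\cdot 0$, so $T_{[\xi]}P_n=\ker\bigl(dh_{[\xi]}\bigr)$, where $dh_{[\xi]}\colon T_{[\xi]}\Hnc\to T_{n\cdot 0}\Snc$. Since $\Hnc$ is smooth of dimension $2n$ this gives $\dim T_{[\xi]}P_n=2n-\rk(dh_{[\xi]})$, so the inequality to prove is exactly $\rk(dh_{[\xi]})\le \rk(\alpha_n|_{[\xi]})$, i.e. $\crk(\alpha_n|_{[\xi]})\le \dim T_{[\xi]}P_n$.

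Next I would bring in the holomorphic symplectic form $\sigma$ on $\Hnc$. The image of $\alpha_n|_{[\xi]}$ is spanned by the values $\tilde v([\xi])$, where $v$ runs over polynomial vector fields on $\CC^2$ and $\tilde v$ is the induced vector field on $\Hnc$ (every section of $T_{\CC^2}|_\xi$ extends to a global $v$). The key input is that when $v=v_H$ is the Hamiltonian vector field of some $H\in\CC[x,y]$, the induced field $\tilde v_H$ is Hamiltonian for the trace function $[\zeta]\mapsto \Tr(H|_{\Oc_\zeta})$, namely $\iota_{\tilde v_H}\sigma=d\,\Tr(H|_{\Oc_\bullet})$. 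This holds on the open locus where $h$ is an isomorphism, since there $\sigma$ is the pullback of the symplectic form on the smooth locus of $\Snc$ and Hamiltonians add over the $n$ distinct points, and hence everywhere by density. Crucially $\Tr(x^ay^b|_{\Oc_\bullet})=h^*p_{a,b}$ is the pullback of the power sum $p_{a,b}=\sum_i x_i^ay_i^b$ on $\Snc$.

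With this in hand I would prove the containment $\bigl(\im(\alpha_n|_{[\xi]})\bigr)^{\perp}\subseteq \ker(dh_{[\xi]})$, where $\perp$ denotes the symplectic orthogonal in $T_{[\xi]}\Hnc$. Indeed, if $w$ satisfies $dh_{[\xi]}(w)\ne 0$ then, because the power sums generate the local ring of $\Snc$ at $n\cdot 0$, some $dp_{a,b}$ pairs nontrivially with $dh_{[\xi]}(w)$; taking $H=x^ay^b$ produces $\tilde v_H([\xi])\in \im(\alpha_n|_{[\xi]})$ with $\sigma\bigl(\tilde v_H([\xi]),w\bigr)=d(h^*p_{a,b})(w)=dp_{a,b}\bigl(dh_{[\xi]}w\bigr)\ne 0$, so $w\notin(\im\alpha_n)^{\perp}$. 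Since $\sigma$ is nondegenerate, $\dim\bigl(\im(\alpha_n|_{[\xi]})\bigr)^{\perp}=2n-\rk(\alpha_n|_{[\xi]})=\crk(\alpha_n|_{[\xi]})$, and the containment yields $\crk(\alpha_n|_{[\xi]})\le \dim\ker(dh_{[\xi]})=\dim T_{[\xi]}P_n$, which is the desired bound.

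For the equality statement when $I_\xi$ is a monomial ideal I would establish the reverse containment $\ker(dh_{[\xi]})\subseteq\bigl(\im(\alpha_n|_{[\xi]})\bigr)^{\perp}$, equivalently $\im(\alpha_n|_{[\xi]})=\bigl(\ker dh_{[\xi]}\bigr)^{\perp}$. Here $[\xi]$ is a fixed point of the torus $(\CC^*)^2$ acting on $\Hnc$, so $T_{[\xi]}\Hnc$ splits into weight spaces, $\sigma$ has weight $(1,1)$, and both $\im\alpha_n$ and $\ker dh$ are graded subspaces; the reverse containment then reduces to checking, weight by weight, that no weight occurring in $\ker dh_{[\xi]}$ is paired by $\sigma$ with a weight of $\im(\alpha_n|_{[\xi]})$ beyond their common part. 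I would verify this by an explicit computation in the standard monomial basis of $\Oc_\xi=\CC[x,y]/I_\xi$, describing $\alpha_n$, $dh_{[\xi]}$ and $\sigma$ combinatorially in terms of the staircase (Young diagram) of $I_\xi$. The main obstacle is precisely this bookkeeping: organizing the arm/leg data of the diagram so that the weights of $\ker dh_{[\xi]}$ and of $\bigl(\im(\alpha_n|_{[\xi]})\bigr)^{\perp}$ are seen to coincide, equivalently computing $\dim T_{[\xi]}P_n$ and $\crk(\alpha_n|_{[\xi]})$ as one and the same combinatorial quantity.
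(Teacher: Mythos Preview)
Your argument for the inequality is correct and takes a genuinely different route from the paper. The paper establishes the sheaf-theoretic containment $dh(h^*\Omega_{\Snc})\subset \omega_n\circ\alpha_n(\tnc)$ by reducing, via a reflexive-sheaf lemma and an \'etale cover, to the explicit $A_1$ resolution $\cotanc\to\aone$, where the identification of the log-tangent sheaf with $\im(dh_0)$ is checked by hand. Your approach bypasses this machinery entirely: you work pointwise, observing that for $H=x^ay^b$ the induced field $\tilde v_H$ is Hamiltonian on $\Hnc$ with Hamiltonian $h^*p_{a,b}$ (checked on the \'etale locus and extended by density), and then use the fact that the $p_{a,b}$ generate the local ring of $\Snc$ to conclude $(\im\alpha_n|_{[\xi]})^{\perp}\subset\ker dh_{[\xi]}$. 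At the level of linear algebra this is exactly the fibrewise content of the paper's sheaf inclusion, but your route is shorter and avoids the \'etale/reflexive reduction. What the paper's approach buys in return is a global statement: the proof there actually yields the isomorphism $h^*(\Omega_{\Snc})^{\vee\vee}\cong\tnc$, not merely a pointwise rank inequality.

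For the monomial equality, however, your proposal is only a plan and not a proof. Saying that both $\ker dh_{[\xi]}$ and $\im\alpha_n|_{[\xi]}$ are graded for the torus and that one should match weights ``by bookkeeping'' does not establish anything; the entire content of the equality lies in that bookkeeping. The paper carries this out in full: it computes $\rk(\alpha_n|_{[\xi]})=\max(\Delta h)+\max(\Delta v)$ directly from the action of monomial derivations on $\Hom(I_\xi,\Oc_\xi)$, and independently computes $\rk(dh|_{[\xi]})$ by writing $h^*p_{r,s}=\Tr(x^ry^s)$ in Haiman's coordinates $c^{r,s}_{i,j}$ and applying the Haiman relations in $\fm_{[\xi]}/\fm_{[\xi]}^2$ to show that only the horizontal arrows of length $\le\max(\Delta h)$ and vertical arrows of length $\le\max(\Delta v)$ survive. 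You would need to supply an argument of comparable concreteness; as written, the equality half of your proposal is a restatement of what has to be shown rather than a demonstration of it.
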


When $\xi$ is a monomial subscheme, the ideal of $\xi$ (written $I_\xi \subset \CC[x,y]$) has an associated Young diagram $\mu_{\xi} \subset \NN^2$ defined as
$$
\mu_{\xi} := \{ (i,j) \in \NN^2 | x^i y^j \notin I_\xi \}.
$$
For example when $I_\xi = (y^4,x^2y^2,x^3y,x^7)$, the length of $\xi$ is 14 and we associate to $\xi$ the following Young diagram:

\begin{center}
\includegraphics[height=4.5cm]{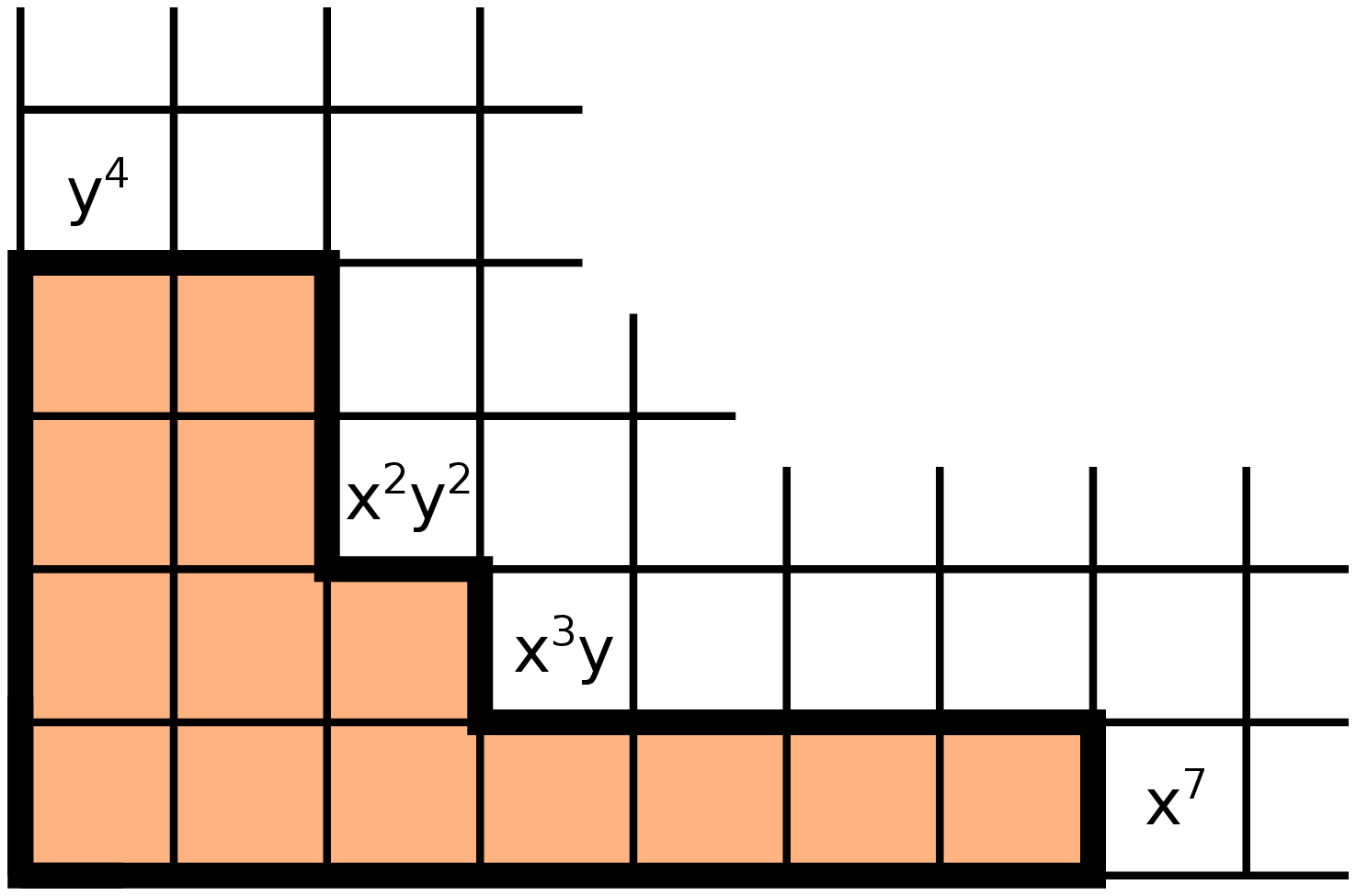}
\end{center}

An elementary statistic associated to $\mu_{\xi}$ is given by tracing the top perimeter of the Young diagram from the top left to the bottom right and keeping track of the horizontal and vertical steps. For example in the above figure we have a sequence of horizontal steps $\Delta h = (2,1,4)$ and vertical steps $\Delta v =(2,1,1)$.

\begin{maincomputation}\label{MC} If $\xi$ is defined by monomials, and $\mu_\xi$ is the corresponding Young diagram then
$$\rk(\alpha_n|_{[\xi]})=\Big( \begin{array}{c}
    \textrm{maximum of horizontal}
  \\
    \textrm{steps of }\mu_\xi
    \end{array} \Big) + \Big( \begin{array}{c}
    \textrm{maximum of vertical}
  \\
    \textrm{steps of }\mu_\xi
    \end{array} \Big).
$$
\end{maincomputation}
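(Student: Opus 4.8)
The plan is to make $\alpha_n|_{[\xi]}$ completely explicit and then use the torus symmetry of a monomial subscheme to turn the rank into a count on the diagram $\mu_\xi$. First I would use the identification $T_{[\xi]}\Hns \cong \Hom_{\CC[x,y]}(I_\xi,\Oc_\xi)$, under which $\alpha_n|_{[\xi]}$ sends a restricted vector field to the homomorphism it induces on the ideal:
$$
x^iy^j\partial_x \;\longmapsto\; \big(f\mapsto \overline{x^iy^j\,\partial_x f}\big),
\qquad
x^iy^j\partial_y \;\longmapsto\; \big(f\mapsto \overline{x^iy^j\,\partial_y f}\big),
$$
for $(i,j)\in\mu_\xi$, where $\overline{(\,\cdot\,)}$ denotes reduction modulo $I_\xi$; these are $\CC[x,y]$-linear because the product-rule error term is divisible by $f\in I_\xi$ and hence dies in $\Oc_\xi=\CC[x,y]/I_\xi$. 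This realizes the $2n$ basis vectors of $\tnl|_{[\xi]}$ as explicit monomial homomorphisms.

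Since $I_\xi$ is a monomial ideal, the ring $\Oc_\xi$, the ideal $I_\xi$, the space $\Hom_{\CC[x,y]}(I_\xi,\Oc_\xi)$, and the source all carry a $\ZZ^2$-grading by bidegree $(\deg_x,\deg_y)$, and $\alpha_n|_{[\xi]}$ is homogeneous of degree $0$. Hence $\rk(\alpha_n|_{[\xi]})$ is the sum of the ranks of its graded pieces, and in bidegree $(a,b)$ the source is spanned by at most the two vectors $x^{a+1}y^b\partial_x$ (present iff $(a+1,b)\in\mu_\xi$) and $x^ay^{b+1}\partial_y$ (present iff $(a,b+1)\in\mu_\xi$).

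The crux is a rank bound in each bidegree. Both of these vectors act on a minimal generator $x^py^q$ of $I_\xi$ by sending it to a scalar multiple ($p$, respectively $q$) of the \emph{same} monomial $\overline{x^{a+p}y^{b+q}}$. If both vectors are present then $a,b\ge 0$, so $(a+p,b+q)$ dominates the generator exponent $(p,q)\notin\mu_\xi$ coordinatewise; since $\mu_\xi$ is an order ideal this forces $\overline{x^{a+p}y^{b+q}}=0$ for every generator, so both images vanish. Thus every bidegree contributes rank $0$ or $1$, all the nonzero monomial images occupy pairwise distinct bidegrees, and $\rk(\alpha_n|_{[\xi]})$ equals the number of basis derivations with nonzero image.

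It then remains to count the surviving derivations. I would show that $x^iy^j\partial_x$ has nonzero image exactly when $i=0$ and some maximal constant run in the sequence of row lengths of $\mu_\xi$ has size $>j$; since these run-sizes are precisely the vertical steps $\Delta v$ (a standard conjugation identity for the diagram), the surviving $\partial_x$-derivations are $y^0\partial_x,\dots,y^{M-1}\partial_x$ with $M=\max(\Delta v)$, giving exactly $\max(\Delta v)$ of them. Transposing $\mu_\xi$ and interchanging $x\leftrightarrow y$ yields $\max(\Delta h)$ surviving $\partial_y$-derivations, and summing gives the formula. The main obstacle is this last combinatorial step: translating the algebraic condition ``$(i+p-1,j+q)\in\mu_\xi$ for some generator $x^py^q$'' into the staircase, verifying that only the leftmost column $i=0$ can contribute, and checking that the tally collapses to the single statistic $\max(\Delta v)$. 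The reduction to $\rk\le 1$ per weight is the conceptual heart but is short once the product-rule computation and the order-ideal observation are in place.
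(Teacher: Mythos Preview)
Your proposal is correct and follows essentially the same approach as the paper: identify $\alpha_n|_{[\xi]}$ with the normal-sequence map $\CC[\xi]\partial_x \oplus \CC[\xi]\partial_y \to \Hom(I_\xi,\CC[\xi])$, show that only the derivations $y^j\partial_x$ and $x^i\partial_y$ can have nonzero image, and then count which of these survive via the shift interpretation on $\mu_\xi$. The only cosmetic difference is in the vanishing step---the paper observes that $x\partial_x$ and $y\partial_y$ act diagonally on monomials (hence map $I_\xi$ to itself) and then kills all their $\Oc_\xi$-multiples, whereas you argue directly from the order-ideal property that any derivation of bidegree $(a,b)$ with $a,b\ge 0$ sends each monomial generator to a monomial still in $I_\xi$; both arguments are one line and yield the same reduction.
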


\noindent In our example, we have $\rk(\alpha_n|_{[\xi]})=4+2=6$, so $\dim T_{[\xi]}P_n = 28-6=22$.

To prove the inequality in Theorem A we remark that the cokernel of the derivative
$$
dh: \Omega_\Snc \ra \Omega_\Hnc
$$
restricted to $[\xi] \in P_n$ is the cotangent space of $P_n$. This follows from Haiman's result that $P_n$ is the scheme-theoretic fiber of $h$. Moreover $\Hnc$ is equipped with a holomorphic symplectic form \cite[\S 1.4]{nakajima} which gives an isomorphism $\omega: T_\Hnc \cong \Omega_\Hnc$. So to prove the inequality, it suffices to show there is a map:
$$
i:h^*\Omega_\Snc \ra \tnc
$$
such that $dh = \omega \circ \alpha_n \circ i$. The derivative $dh$ factors through $h^*\Omega_\Snc^{\vee \vee}$, a reflexive sheaf, so it is enough to define $i$ away from codimension 2. Away from codimension 2 the map $h$ is \'etale locally a product of the resolution of an $A_1$ singularity with a smooth variety. So the inequality follows after a computation in the case of an $A_1$ singularity, using the interpretation of $\tnl$ as the log-tangent sheaf of the exceptional divisor of $h$.

To carry out the computation in Theorem B we note that the rank of $\alpha_n$ at some $[\xi] \in \Hnc$ is the rank of the map:
$$
H^0(\CC^2,T_{\CC^2}|_\xi) \ra \Hom(I_\xi,\Oc_\xi)
$$
in the normal sequence of $\xi \subset \CC^2$ so we can carry out the computation on $\CC^2$.

To show that equality holds in Theorem A for subschemes of $\CC^2$ cut out by monomials our main computational tool is the affine chart that Haiman introduced in ~\cite{haiman} for $\Hnc$ and the description Haiman gave of the cotangent space at monomial subschemes. Using these tools we explicitly compute the rank of $dh$ at points in $\Hnc$ corresponding to monomial subschemes and show that for $\xi \subset \CC^2$ cut out by monomials:
$$
\rk(dh|_{[\xi]}) =\Big( \begin{array}{c}
    \textrm{maximum of horizontal}
  \\
    \textrm{steps of }\mu_\xi
    \end{array} \Big) + \Big( \begin{array}{c}
    \textrm{maximum of vertical}
  \\
    \textrm{steps of }\mu_\xi
    \end{array} \Big).
$$

We would like to thank our respective advisors Dan Abramovich and Robert Lazarsfeld for their advice and encouragement throughout this project. We are also grateful for conversations with Shamil Asgarli, Kenneth Ascher, Aaron Bertram, Mark de Cataldo, Johan de Jong, Lawrence Ein, Eugene Gorsky, Tony Iarrobino, Daniel Litt, Diane Maclagan, Mark McLean, Luca Migliorini, Mircea Musta\c{t}\v{a}, Hiraku Nakajima, John Ottem, Giulia Sacc\`a, David Speyer, and Zili Zhang.


\section{The proof of the inequality in Theorem A}

In this section we prove the inequality in \hr{MT}{Theorem A}. We start by recalling the main properties of the Hilbert scheme of points that we will need. Let $\Zcaln$ be the universal family of the Hilbert scheme of points on $S$, then $\Zcaln$ has 2 natural projections:

\begin{center}
\begin{tikzpicture}
  \node (Zn) {$S \times \Hns \supset \Zcaln$};
  \node (S) at (3.5,0) {$S$.};
  \node (HnS) at (.95,-1.5) {$\Hns$};
  \draw[->] (1.2,0) to node[above] {$p_1$} (S);
  \draw[->] (.95,-.3) to node [left] {$p_2$} (HnS);
\end{tikzpicture}
\end{center}

\noindent If $\Ec$ is a vector bundle on $S$, then \emph{the tautological bundle associated to} $\Ec$ is $\Enl:= p_{2*}p_1^*\Ec$. The map $\alpha_n$ is obtained by looking at the normal sequence of the inclusion $\Zcaln \subset S\times \Hns$, 
$$
0 \ra T_{\Zcaln} \ra T_{S \times \Hns}|_\Zcaln \cong p_1^* (T_S) \oplus p_2^* (T_\Hns) \xrightarrow{\beta_n} \Homc(I_\Zcaln/I_\Zcaln^2,\Oc_\Zcaln).
$$ 
Applying $p_{2*}(-)$, we see that $p_1^* (T_S)$ pushes forward to $\tnl$ and $\Homc(I_\Zcaln/I_\Zcaln^2,\Oc_\Zcaln)$ pushes forward to $T_{\Hns}$. Then
$$
\alpha_n:=p_{2*}(\beta_n|_{p_1^*(T_S)}).
$$

The symmetric power $\Snc$ is the quotient of $(\CC^2)^{n}$ by the permutation action of the symmetric group on $n$ elements: $\SymG$. The Hilbert-Chow morphism:
$$
h: \Hnc \ra \Snc
$$
maps a point corresponding to a subscheme $[\xi]$ to the $n$-cycle:
$$
h([\xi])=\sum_{p \in \mathrm{Supp}(\xi)}\mathrm{length}_{\CC}(\Oc_{\xi,p})\cdot [p].
$$
The exceptional divisor of $h$, denoted by $B_n$, consists of non-reduced subschemes.

\begin{remark}\label{difseq}
It is always true that for a map of schemes $f: X \ra Y$, if $p \in Y$ and $q \in f^{-1}(p)$ the scheme-theoretic fiber then
$$
T_q f^{-1}(p)\cong Coker(df|_q: f^*\Omega_Y|_q \ra \Omega_X|_q)^{\vee}.
$$
We want to compute the dimension of the Zariski tangent space of $P_n$. As Haiman showed \cite[Prop. 2.10]{haiman} the variety $P_n$ is the scheme-theoretic fiber of $h$, it suffices to compute the corank of $dh$. In particular, the inequality in Theorem A is equivalent to 
$$
\crk(dh|_{[\xi]}) \geq \crk(\alpha_n|_{[\xi]}).
$$ 
\end{remark}

Recall there is a holomorphic symplectic form symplectic form $\omega_n \in H^0(\Hnc,\wedge^2 \Omega_\Hnc)$ on $\Hnc$ \cite[\S 1.4]{nakajima}
which gives an isomorphism $\omega_n:T_\Hnc \cong \Omega_\Hnc$. To bound the corank of $dh$, it suffices to prove that the map $dh$ factors through $\omega_n \circ \alpha_n$:
\begin{center}
\begin{tikzpicture}
  \node (T) {$\tnc$};
  \node (Os) at (-3,-1.5) {$h^*\Omega_\Snc$};
  \node (Oh) at (0,-1.5) {$\Omega_\Hnc.$};
  \draw[dashed,->] (Os) to node[above] {$\exists$} (T);
  \draw[->] (T) to node[right] {$\omega_n \circ \alpha_n$} (Oh);
  \draw[->] (Os) to node[above] {$dh$} (Oh);
\end{tikzpicture}
\end{center}
\noindent As $\omega_n \circ \alpha_n$ is injective is suffices to show that $\omega_n \circ \alpha_n \tnc$ contains the image of $dh$. To do this, we need the following lemma.

\begin{lemma}\label{reflexive}
Suppose $X$ is a normal variety and $\Fc_1,\Fc_2 \subset \Ec$ are subsheaves of a torsion free sheaf on $X$. If $\Fc_2$ is reflexive then the following are equivalent:
\begin{enumerate}
\item $\Fc_1 \subset \Fc_2$ as subsheaves of $\Ec$.
\item There is an open subset $V\subset X$ with codimension 2 complement such that $\Fc_1|_V \subset \Fc_2|_V$ as subsheaves of $\Ec|_U$.
\item There is an \'etale neighborhood $i:U \ra X$ such that the complement of $i(U) = V$ has codimension 2 and $i^* \Fc_1 \subset i^*\Fc_2$ as subsheaves of $i^*\Ec$.
\end{enumerate}
\end{lemma}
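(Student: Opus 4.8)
The plan is to prove the cycle $(1)\Rightarrow(2)\Rightarrow(3)\Rightarrow(2)\Rightarrow(1)$, where the only substantial content lies in $(3)\Rightarrow(2)$ (faithfully flat descent) and in $(2)\Rightarrow(1)$ (the extension property of reflexive sheaves). The implication $(1)\Rightarrow(2)$ is immediate, taking $V=X$, and $(2)\Rightarrow(3)$ is equally immediate since the open immersion $V\hookrightarrow X$ is \'etale with image $V$ and $i^*\Fc_k=\Fc_k|_V$.

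For $(3)\Rightarrow(2)$ I would note that, since $i$ is \'etale with $i(U)=V$, it factors as $U\xrightarrow{i'}V\hookrightarrow X$ with $i'\colon U\ra V$ \'etale and surjective, hence faithfully flat, and $i^*\Fc_k=(i')^*(\Fc_k|_V)$. Containment of subsheaves is detected by vanishing of the composite $\Fc_1|_V\ra\Ec|_V\ra(\Ec/\Fc_2)|_V$. As $(i')^*$ is exact and commutes with forming this cokernel, the composite vanishes if and only if its pullback $i^*\Fc_1\ra i^*\Ec\ra i^*\Ec/i^*\Fc_2$ vanishes; faithful flatness of $i'$ says a map vanishes precisely when its pullback does, so the hypothesis $i^*\Fc_1\subset i^*\Fc_2$ descends to $\Fc_1|_V\subset\Fc_2|_V$.

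The heart of the argument is $(2)\Rightarrow(1)$, which is where both torsion-freeness of $\Ec$ and reflexivity of $\Fc_2$ enter. Write $Z=X\setminus V$ and $j\colon V\hookrightarrow X$ for the open inclusion, so $\mathrm{codim}_X Z\ge 2$ and $V$ is dense. Because $\Ec$ is torsion-free on the integral scheme $X$, the restriction $\Ec(W)\ra\Ec(W\cap V)$ is injective for every open $W$, i.e. the unit $\Ec\ra j_*j^*\Ec$ is a monomorphism. Because $\Fc_2$ is reflexive and $X$ is normal, sections of $\Fc_2$ extend uniquely across the codimension-two locus $Z$; that is, the unit $\Fc_2\ra j_*j^*\Fc_2$ is an isomorphism (the standard $S_2$/normality property of reflexive sheaves on normal varieties). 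Now given a local section $s\in\Fc_1(W)$, I regard it as an element of $\Ec(W)$; hypothesis (2) places its restriction $s|_{W\cap V}$ in $\Fc_2(W\cap V)$. Since $\Fc_2(W)\xrightarrow{\sim}\Fc_2(W\cap V)$, there is a unique $t\in\Fc_2(W)$ restricting to $s|_{W\cap V}$; as $t$ and $s$ agree on the dense open $W\cap V$ and $\Ec$ is torsion-free, $t=s$ in $\Ec(W)$. Hence $s\in\Fc_2(W)$, proving $\Fc_1\subset\Fc_2$.

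I expect the main obstacle to be the bookkeeping in $(2)\Rightarrow(1)$: one must keep all three sheaves embedded compatibly in the ``rational hull'' $j_*j^*\Ec$ so that the extension $t\in\Fc_2(W)$ is literally identified with the original section $s$, rather than merely an abstract section agreeing generically. Torsion-freeness of $\Ec$ is exactly what licenses this identification, and the reflexive extension property is the one external input; once it is invoked, the remainder is formal.
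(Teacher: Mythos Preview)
Your proof is correct and follows essentially the same route as the paper: the trivial implications $(1)\Rightarrow(2)\Rightarrow(3)$, faithfully flat descent for $(3)\Rightarrow(2)$, and the extension property of reflexive sheaves on a normal variety across codimension~$2$ for $(2)\Rightarrow(1)$. The only cosmetic difference is that the paper carries out $(2)\Rightarrow(1)$ by first passing to the reflexive hull $\Fc_1^{\vee\vee}$ and extending the inclusion of reflexive sheaves, whereas you work section-by-section via the unit maps $\Ec\hookrightarrow j_*j^*\Ec$ and $\Fc_2\xrightarrow{\sim} j_*j^*\Fc_2$; both arguments invoke the same ``normality'' property of reflexive sheaves and are equivalent in content.
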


\begin{proof}
It is clear $(1)$ implies $(2)$. Now we show the reverse. We remark that $\Fc_1$ is torsion-free, so it includes into its reflexive hull $\Fc_1 \hookrightarrow \Fc_1^{\vee \vee}$. The inclusion $\Fc_1|_V \subset \Fc_2|_V$ as submodules of $\Ec$ extends to an inclusion $\Fc_1|_V^{\vee \vee} \subset \Fc_2|_V$ as any map to a reflexive module factors through the reflexive hull. Now an inclusion of reflexive modules on a normal variety outside codimension 2 uniquely extends to an inclusion on the whole variety. This follows from the fact that reflexive sheaves are \emph{normal} (see \cite[p. 76]{OSS} for the smooth case). Thus we have an inclusion $\Fc_1^{\vee \vee} \subset \Fc_2$. Therefore we have $\Fc_1 \subset \Fc_1^{\vee \vee} \subset \Fc_2$ as submodules of $\Ec$.

Flatness of $i$ proves $(2)$ implies $(3)$. For the reverse, faithful flatness of $i$ mapping onto $V$ gives an inclusion $\Fc_1|_V \subset \Fc_2|_V$ and $V$ is an open set whose complement has codimension $2$. 
\end{proof}

Let $\Symtwo = \langle(12)\rangle \le \SymG$ be the subgroup which exchanges the first 2 elements. Denote by $\Delta \subset (\CC^2)^n$ the big diagonal fixed by $\Symtwo$. The quotient map $\sigma: (\CC^2)^n \ra \Snc$ factors as:
\begin{center}
\begin{tikzpicture}
  \node (C2n) {$(\CC^2)^n$};
  \node (modS2) at (0,-1.5) {$(\CC^2)^n/\Symtwo$};
  \node (Sym) at (3,0) {$\Snc.$};
  \draw[->] (C2n) to node[above] {$\sigma$} (Sym);
  \draw[->] (C2n) to node[left] {$\tau$} (modS2);
  \draw[->] (modS2) to node[below] {$j$} (Sym);
\end{tikzpicture}
\end{center}
\noindent After appropriate change of coordinates $(\CC^2)^n/\Symtwo \cong \aone \times (\CC^2)^{n-1}$, and the symplectic form on the smooth locus of $\Snc$ pulls back and extends to the product symplectic form on the smooth locus of $\aone \times (\CC^2)^{n-1}$. Recall that $\aone \times (\CC^2)^{n-1}$ admits a symplectic resolution:
$$
h_0\times id_{(\CC^2)^{n-1}}: \cotanc \times (\CC^2)^{n-1} \ra \aone \times (\CC^2)^{n-1}
$$
by blowing up $\tau(\Delta)$. Here $\cotanc$ denotes the cotangent bundle of $\PP^1$ with the standard symplectic structure $\omega_\cotanc$ and
$$
h_0: \cotanc \ra \aone
$$
is the minimal resolution of the $A_1$ surface singularity with exceptional divisor $E$.

Let $V$ denote the unramified locus of $j$. $V$ is the complement of the image of all the big diagonals except $\Delta$. Important to us is that $j(V)$ contains all points in $\Snc$ where at most 2 points come together. The following lemma says that if $i:U\ra \Hnc$ is the base change of the \'etale neighborhood $V$ along $h$ then $U$ satisfies the conditions of Lemma 2(3) so that we can check the inclusion $\im(dh) \subset \omega_n \circ \alpha_n(\tnl)$ by pulling back to $U$.

\begin{lemma}\label{etale}
The fiber product:
\begin{center}
\begin{tikzpicture}
  \node (U) {$U$};
  \node (V) at (3,0) {$V$};
  \node (Hilbn) at (0,-1.5) {$\Hnc$};
  \node (A) at (3,-1.5) {$\Snc$};
  \draw[->] (U) to node[above] {$h'$} (V);
  \draw[->] (U) to node[left] {$i$} (Hilbn);
  \draw[->] (V) to node[left] {$j$} (A);
  \draw[->] (Hilbn) to node[above] {$h$} (A);
\end{tikzpicture}
\end{center}
\noindent satisfies $i$ is \'etale and the complement of $i(U)$ has codimension 2. Moreover $U \subset \cotanc \times (\CC^2)^{n-1}$ such that $h_0\times id_{(\CC^2)^{n-1}}|_U = h'$ and the restriction of the symplectic form from $\cotanc \times (\CC^2)^{n-1}$ equals $i^*(\omega_n)$.
\end{lemma}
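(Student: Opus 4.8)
The plan is to realize $U$ as the restriction of the symplectic resolution $h_0 \times \id$ over the open set $V$, and to deduce every assertion from the single fact that $j$ is \'etale on $V$. First I would verify that $j|_V \colon V \ra \Snc$ is \'etale. By construction $V$ is the unramified locus of the finite morphism $j$, so only flatness remains. Away from $\tau(\Delta)$ the points of $V$ parametrize $n$ distinct points of $\CC^2$, where $j$ is a local isomorphism; along $\tau(\Delta)\cap V$ only the first two coordinates collide, so both $V$ and $\Snc$ are \'etale-locally isomorphic to $\aone \times (\CC^2)^{n-1}$ and $j$ is again a local analytic isomorphism. Hence $j|_V$ is \'etale, and since $i$ is its base change along $h$, the map $i$ is \'etale; in particular $U$ is smooth and $h'$ is the base change of $h$.

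Next I would observe that $i(U) = h^{-1}(j(V))$, so the complement of $i(U)$ equals $h^{-1}(\Snc \setminus j(V))$. Since $j(V)$ contains every cycle in which at most two points come together, $\Snc \setminus j(V)$ is contained in the union of the strata whose partition type has a part $\geq 3$ or at least two parts $\geq 2$. I would bound the preimage stratum by stratum: the stratum of a cycle $\sum_{i=1}^r m_i[p_i]$ has dimension $2r$ in $\Snc$, while its $h$-fiber is the product $\prod_i P_{m_i}$ of dimension $\sum_i (m_i-1) = n-r$ because $\dim P_k = k-1$; hence its preimage has dimension $n+r$. On the excluded strata one has $r \leq n-2$, so the preimage has dimension at most $2n-2$, i.e. codimension at least $2$.

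The substantive point is to identify $U$ with $(h_0 \times \id)^{-1}(V) \subset \cotanc \times (\CC^2)^{n-1}$ compatibly with the maps to $V$ and with the symplectic forms. Over $j(V)$ the fibers of $h$ are points or copies of $P_2 \cong \PP^1$, so $h^{-1}(j(V)) \ra j(V)$ is a crepant resolution of the transverse $A_1$ singularities of $\Snc$ along its $2$-collision locus; since $j$ is \'etale, the base change $h' \colon U \ra V$ is a crepant resolution of $V$. The morphism $(h_0 \times \id)^{-1}(V) \ra V$ is likewise a crepant resolution, being the restriction of the symplectic resolution obtained by blowing up $\tau(\Delta)$. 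As transverse $A_1$ singularities along a smooth codimension-$2$ center admit a unique crepant resolution (there are no flops in the $A_1$ case), these two resolutions are isomorphic over $V$; this yields the embedding $U \subset \cotanc \times (\CC^2)^{n-1}$ with $h_0 \times \id|_U = h'$.

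Finally, to match the two symplectic forms I would argue on the dense open $U \setminus E'$, the complement of the exceptional divisor, where $h'$ restricts to an isomorphism onto $V \setminus \tau(\Delta)$. There $i^*\omega_n$ is the pullback of the symplectic form from the smooth locus of $\Snc$, which under the identification above agrees with the restriction of the product form $\omega_\cotanc \boxplus \omega_{(\CC^2)^{n-1}}$. Two holomorphic $2$-forms on the smooth variety $U$ that agree on a dense open set are equal, so $i^*\omega_n$ equals the restricted symplectic form everywhere. I expect the main obstacle to be the identification in the third paragraph: one must confirm that Hilbert--Chow genuinely restricts to the minimal crepant resolution over the $2$-collision locus and invoke uniqueness of that resolution correctly, since this is precisely the step that pins the global geometry of $\Hnc$ to the local model $\cotanc \times (\CC^2)^{n-1}$.
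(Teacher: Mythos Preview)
Your proposal is correct and complete; the paper itself gives no argument here, merely citing Beauville and Nakajima for the construction of the symplectic form on $\Hnc$, and what you have written is precisely a careful unpacking of that standard construction. The one stylistic difference is that you identify $U$ with the local model by invoking uniqueness of the crepant resolution of a transverse $A_1$ locus, whereas the cited references proceed more concretely by identifying $h$ over the $2$-collision stratum with $\Hilb^2(\CC^2)\times(\CC^2)^{n-2}$ and computing $\Hilb^2(\CC^2)$ as the blowup directly; your route is equally valid and arguably cleaner, since it avoids any explicit chart computation. Your codimension count and the density argument for matching the two symplectic forms are both fine as stated.
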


\begin{proof}
This is essentially the proof that $\Hnc$ admits a holomorphic symplectic form and we refer the interested reader to ~\cite[p. 766]{Beauville} or \cite[\S 1.4]{nakajima}.
\end{proof}

In \cite[Theorem B]{Stapleton} the second author proved the map $\alpha_n$ induces an isomorphism of $\tnl$ with the subsheaf $\dern$ which consists of vector fields tangent to $B_n$. To set up the proof of the inequality in Theorem A we consider the symplectic resolution of the $A_1$-singularity and prove the log-tangent sheaf $\derc$ is isomorphic to the image $dh_0$ as subsheaves of $\Omega_\cotanc$.

\begin{lemma}\label{aone}
The symplectic isomorphism $\omega_\cotanc: T_{\cotanc} \cong \Omega_{\cotanc}$ restricts to an isomorphism of subsheaves:
$$
\omega_\cotanc|_{\derc}: \derc \ra dh_0(h_0^*\Omega_{\aone}).
$$
\end{lemma}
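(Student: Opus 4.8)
The plan is to reduce the statement to an explicit local computation in the two standard affine charts of the minimal resolution $h_0:\cotanc\ra\aone$. Both $\derc$ and $dh_0(h_0^*\Omega_{\aone})$ are subsheaves of locally free rank-two sheaves (namely $T_{\cotanc}$ and $\Omega_{\cotanc}$), and $\omega_{\cotanc}$ is an isomorphism of these ambient sheaves, so it automatically restricts to an injection $\derc\hookrightarrow\Omega_{\cotanc}$; what remains is to identify its image with the subsheaf $dh_0(h_0^*\Omega_{\aone})$. Away from $E$ this is immediate, since there $\derc=T_{\cotanc}$ and $h_0$ is an isomorphism, so $dh_0(h_0^*\Omega_{\aone})=\Omega_{\cotanc}$. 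Hence the whole content of the lemma is the agreement of the two subsheaves along the exceptional divisor $E$, which I would verify by matching generators chart by chart.

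First I would fix coordinates. Writing $\aone=\Spec\CC[a,b,c]/(b^2-ac)$, I would describe $\cotanc$ by its two affine charts and choose coordinates $(s,t)$ on one of them so that $E=\{s=0\}$, the symplectic form is $\omega_{\cotanc}=ds\wedge dt$, and the resolution is given by $h_0^*a=s$, $h_0^*b=st$, $h_0^*c=st^2$. (The relation $b^2=ac$ pulls back to an identity, $E=\{s=0\}$ contracts to the singular point, and the map is an isomorphism for $s\neq 0$, so this is indeed the minimal resolution.) In these coordinates the log-tangent sheaf of the smooth divisor $E=\{s=0\}$ is the free module $\derc=\langle s\,\partial_s,\ \partial_t\rangle$.

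Next I would compute the target subsheaf and contract. Pulling back the generators of $\Omega_{\aone}$ gives $dh_0(da)=ds$, $dh_0(db)=t\,ds+s\,dt$, and $dh_0(dc)=t^2\,ds+2st\,dt$; taking the $\Oc$-span and reducing (subtracting $t\cdot ds$ from the second generator) shows $dh_0(h_0^*\Omega_{\aone})=\langle ds,\ s\,dt\rangle$. Then contraction with $\omega_{\cotanc}=ds\wedge dt$ gives $\omega_{\cotanc}(\partial_s)=dt$ and $\omega_{\cotanc}(\partial_t)=-ds$, hence $\omega_{\cotanc}(s\,\partial_s)=s\,dt$ and $\omega_{\cotanc}(\partial_t)=-ds$. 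Thus the contraction carries the generators $s\,\partial_s,\ \partial_t$ of $\derc$ exactly onto the generators $s\,dt,\ ds$ of $dh_0(h_0^*\Omega_{\aone})$, which yields the claimed equality of subsheaves in this chart. The computation in the second chart is identical after the transition $t\mapsto 1/t$, $s\mapsto -st^2$, which preserves $\omega_{\cotanc}$, so the identification is global.

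The step demanding the most care is exactly the one flagged above: because the two subsheaves coincide with the ambient sheaves precisely off the \emph{codimension-one} divisor $E$, their equality cannot be obtained from an inclusion over a codimension-two complement, so Lemma~\ref{reflexive} does not apply and one genuinely must produce the honest local generators. Concretely, the delicate point is that $dh_0(h_0^*\Omega_{\aone})$ is the \emph{non-saturated} image sheaf $\langle ds,\ s\,dt\rangle$ rather than its saturation $\Omega_{\cotanc}$, and one must check that contraction by $\omega_{\cotanc}$ reproduces precisely this defect along $E$ — which the generator-level matching above confirms.
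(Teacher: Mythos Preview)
Your proof is correct, but it takes a different route from the paper's. You work in explicit local coordinates on $T^*\PP^1$, write down generators for both $\derc$ and $dh_0(h_0^*\Omega_{\aone})$ in each chart, and match them via contraction with $ds\wedge dt$. The paper instead argues conceptually via two short exact sequences
\[
0\to\derc\to T_{\cotanc}\to\Oc_E(E)\to 0,\qquad 0\to dh_0(h_0^*\Omega_{\aone})\to\Omega_{\cotanc}\to\Omega_E\to 0,
\]
and shows that $\omega_{\cotanc}$ carries the first into the second: for any logarithmic $v$, skew-symmetry forces $\omega_{\cotanc}(v)|_E$ to annihilate $T_E$, hence to vanish in $\Omega_E$; the induced map on cokernels $\Oc_E(E)\to\Omega_E$ is then a surjection of isomorphic line bundles on $E\cong\PP^1$ (both are $\Oc_{\PP^1}(-2)$), hence an isomorphism, and the five lemma finishes. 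The paper's argument is coordinate-free and isolates the geometric mechanism (skew-symmetry plus $E^2=-2$), which would port to analogous situations; your argument is more elementary and makes the key point you emphasize---that the image sheaf is genuinely $\langle ds,\,s\,dt\rangle$ and not its saturation---completely transparent.
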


\begin{proof} We have two exact sequences:
\begin{center}
\begin{tikzpicture}
  \node (Tangent) {$T_\cotanc$};
  \node (Derc) at (-3,0) {$\derc$};
  \node (Norm) at (2,0) {$\Oc_E(E)$};
  \node (Omega) at (0,-1.5) {$\Omega_\cotanc$};
  \node (OmegaE) at (2,-1.5) {$\Omega_E$};
  \node (dh) at (-3,-1.5) {$ dh_0(h_0^*\Omega_\aone)$};
  \node (01) at (-6,0) {$0$};
  \node (02) at (4,0) {$0$};
  \node (03) at (-6,-1.5) {$0$};
  \node (04) at (4,-1.5) {$0.$};
  \draw[->] (Tangent) to (Norm);
  \draw[->] (Omega) to (OmegaE);
  \draw[->] (Derc) to (Tangent);
  \draw[->] (dh) to (Omega);
  \draw[->] (Tangent) to node[left] {$\omega_\cotanc$} (Omega);
  \draw[->] (01) to (Derc);
  \draw[->] (03) to (dh);
  \draw[->] (Norm) to (02);
  \draw[->] (OmegaE) to (04);
  \draw[dashed,->] (Derc) to (dh);
  \draw[dashed,->] (Norm) to (OmegaE);
\end{tikzpicture}
\end{center}
If $v\in \derc(U)$ is any logarithmic vector field then $v$ is tangent to $E$, so for any point $p\in E$ with $v|_p \ne 0$ we know $v|_p$ generates the tangent space of $E$. On the other hand the pairing of the 1-form $\omega_\cotanc(v)|_E$ with $v|_p$ vanishes by skew symmetry of $\omega_\cotanc$. So the restricted 1-form $\omega_\cotanc(v)|_E$ vanishes identically. Thus we can fill in the dashed arrows to obtain a commuting diagram with an injection on the left and a surjection on the right. But the surjection on the right is an isomorphism as these are isomorphic line bundles on $E$, thus $\omega_\cotanc|_\derc$ is also an isomorphism.
\end{proof}

\begin{proof}[Proof of inequality in \hr{MT}{Theorem A}]
According to Remark \ref{difseq}, it is enough to show that as subsheaves of $\Omega_\Hnc$ we have the containment $dh(h^*\Omega_\Snc) \subset \omega_n \circ \alpha_n(\tnc)$. If $i: U \ra \Hnc$ is the \'etale open set from Lemma \ref{etale}, then by Lemma \ref{reflexive} it is enough to show that $i^*(dh(h^*\Omega_\Snc)) \subset i^*(\omega_n \circ \alpha_n(\tnl))$ as subsheaves of $i^* \Omega_\Hnc = \Omega_U$.

Let $E'$ denote the exceptional divisor of $h'$. By Lemma \ref{etale}, we have a fiber square with $i$ \'etale and $i^{-1}(B_n) = E'$. It follows that
$$
i^*(dh(h^*\Omega_\Snc)) = dh'(h'^*\Omega_V),\hspace{5mm}\text{and}\hspace{5mm}i^*(\tnc) = \derp.
$$
For the second equality we use the interpretation of $\alpha_n(\tnc)$ as the log-tangent sheaf of $B_n$ \cite[Theorem B]{Stapleton}. On the one hand, the exceptional divisor $E' = U \cap (E\times (\CC^2)^{n-1})$ is locally a product, so the log-tangent sheaf of $E'$ splits as a direct sum:
$$
\derp =(p^*\derc \oplus q^* T_{(\CC^2)^{n-1}})|_U
$$
where $p$ and $q$ denote projection of $\cotanc \times (\CC^2)^{n-1}$ onto $\cotanc$ and $(\CC^2)^{n-1}$ respectively. On the other hand, $h' = (h_0\times id_{(\CC^2)^{n-1}})|_{U}$ so the subsheaf $dh'(h'^*\Omega_V)$ splits as a direct sum:
$$
dh'(h'^*\Omega_V) = (p^*dh_0(h_0^*\Omega_{\aone}) \oplus q^*\Omega_{(\CC^2)^{n-1}})|_U \subset (p^*\Omega_\cotanc \oplus q^*\Omega_{(\CC^2)^{n-1}})|_U = \Omega_U.
$$

Finally by Lemma \ref{etale}, $i^* \omega_n$ is the same as the restriction of the product symplectic form on $\cotanc \times (\CC^2)^{n-1}$. Therefore it suffices to check that the symplectic form on $\cotanc \times (\CC^2)^{n-1}$ identifies $p^*dh_0(h_0^*\Omega_{\aone}) \oplus q^*\Omega_{(\CC^2)^{n-1}}$ with $p^*\derc \oplus q^* T_{(\CC^2)^{n-1}}$. As $i^*\omega_n$ is a product symplectic form it respects this direct sum decomposition. The second factors are clearly identified and the first factors are identified by Lemma \ref{aone}.
\end{proof}

\begin{remark}
The above proof actually shows there is an isomorphism:
$$
h^*(\Omega_{\Snc})^{\vee \vee} \cong \tnc,
$$
that is $\tnc$ is the reflexive hull of $h^*(\Omega_\Snc)$.
\end{remark}


\section{Computing the rank of $\alpha_n$ at monomial subschemes}

In this section we show how to compute the rank of $\alpha_n$ at monomial subschemes, proving \hr{MC}{Theorem B}. During the proof, we exhibit the computation on an example subscheme $\xi \subset \CC^2$ with $I_\xi = (y^4,xy^2,x^2y,x^5)$.

\begin{proof}[Proof of \hr{MC}{Theorem B}] Let $\xi \subset \CC^2$ be a length $n$ subscheme whose ideal $I_\xi$ is defined by monomials. As in the introduction we associate to $\xi$ the Young diagram (see Figure a) $\mu=\mu_\xi \subset \NN^2$ defined as
$$
\mu := \{ (i,j) \in \NN^2 | x^i y^j \notin I_\xi \}.
$$
We associate to $\mu$ the elementary statistic given by tracing the top perimeter of $\mu$ from the top left to the bottom right and recording the horizontal steps $\Delta h$ and the vertical steps $\Delta v$ (see Figure b).

\begin{figure}[H]
    \begin{subfigure}{0.35\textwidth}
        \centering
        \includegraphics[width=.8\textwidth]{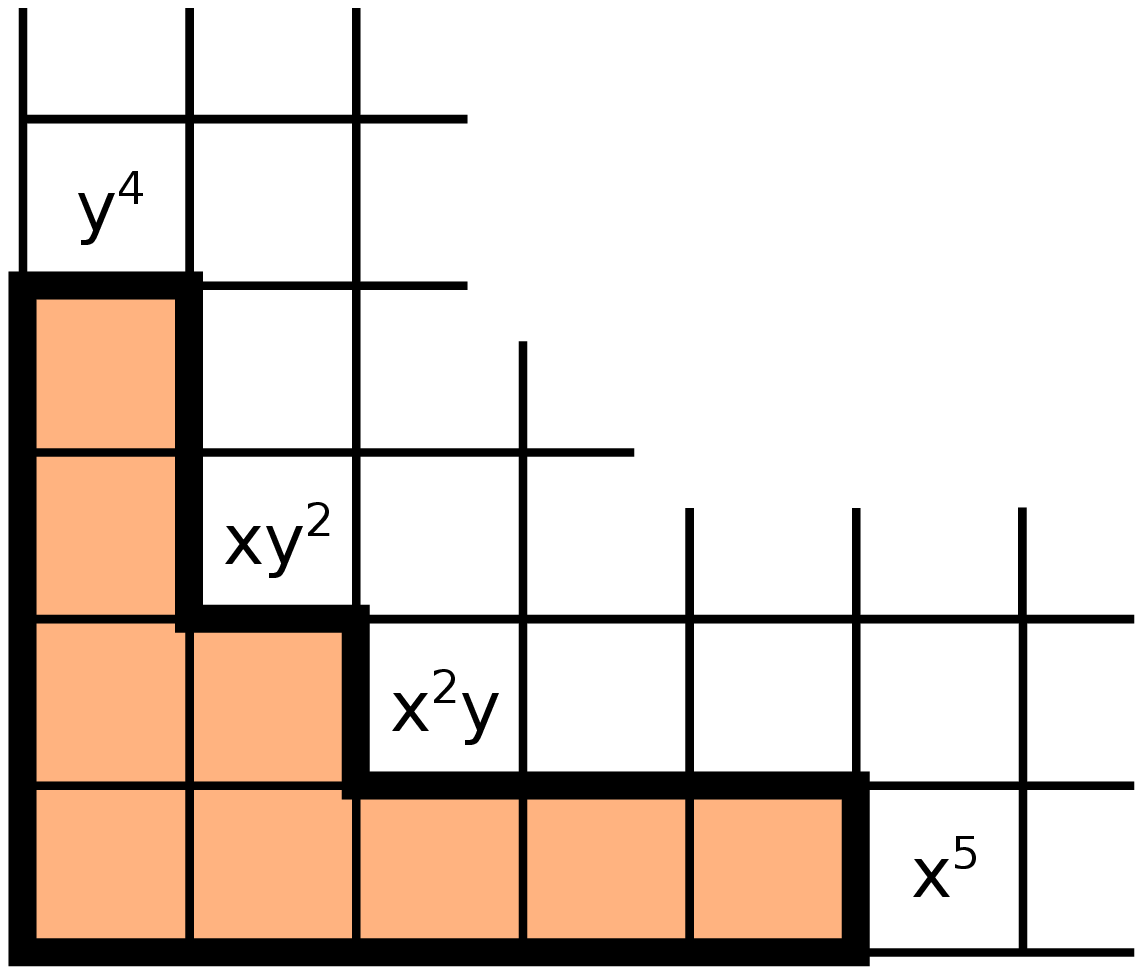}
        \caption*{Figure a. The Young diagram associated to our example $\xi \subset \CC^2$.}
    \end{subfigure}
    \hspace{1cm}
    \begin{subfigure}{0.35\textwidth}
        \centering
        \includegraphics[width =.8\textwidth]{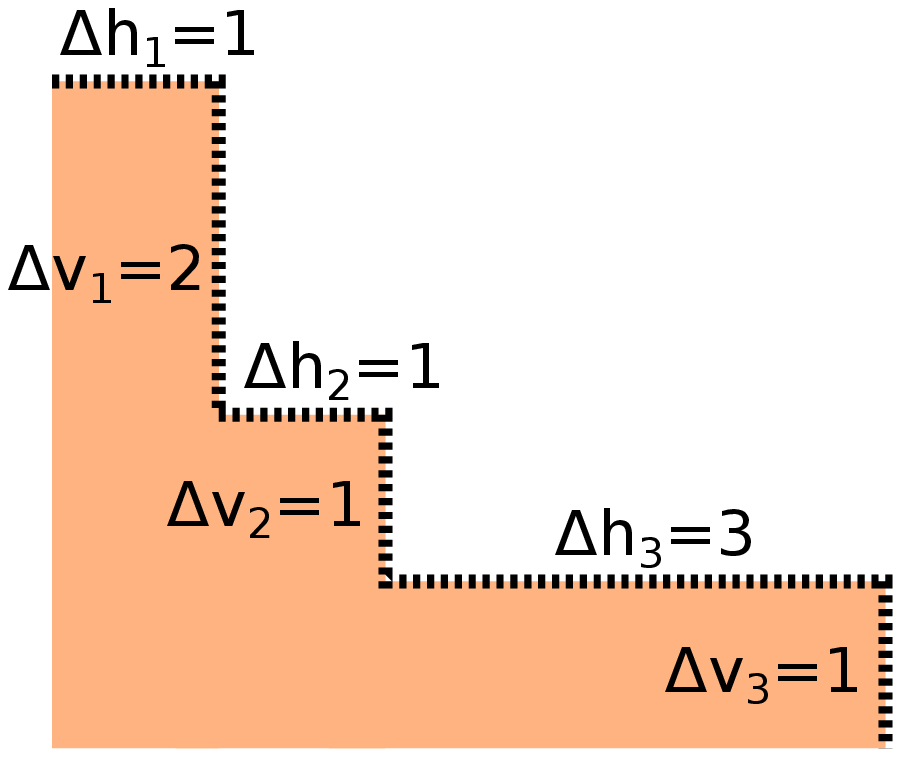}
        \caption*{Figure b. In our example $\xi$ we have $\Delta h = (1,1,3)$ and $\Delta v = (2,1,1)$.}
    \end{subfigure}
\end{figure}

Our aim is to compute $\rk(\alpha_n|_{[\xi]})$. There are natural isomorphisms: $\tnc|_{\xi} \cong H^0(T_{\CC^2}|_{\xi}) \cong \CC[\xi]\ddx \oplus \CC[\xi]\ddy$ and $T_\Hnc|_{[\xi]} \cong \Hom(I_\xi,\CC[\xi])$. Moreover the map $\alpha_n|_\xi$ is exactly the map in the normal sequence associated to $\xi \subset \CC^2$ which maps any restricted derivation $\delta \in T_{\CC^2}$ to a homomorphism by:

\begin{center}
$\displaystyle \begin{array}{lr} \alpha_n|_{[\xi]}: \CC[\xi] \ddx \oplus \CC[\xi] \ddy \to \Hom(I_\xi, \CC[\xi]), & \delta \mapsto \Big( \begin{array}{c}
    I_{\xi} \xrightarrow{\alpha_n|_{[\xi]}(\delta)} \CC[\xi] \\
    f \mapsto \delta(f)|_{\xi}
  \end{array} \Big) \end{array}$.
\end{center}

As $I_\xi$ is generated by monomials we can decompose $I_\xi = \bigoplus_{\NN^2 \setminus \mu} \CC\cdot x^iy^j$ as a $\CC$-vector space. Moreover, the ring of functions on $\xi$ has a monomial $\CC$-vector space basis $\CC[\xi] = \bigoplus_{\mu} \CC \cdot x^iy^j$. Observe that for any \emph{monomial} derivation $x^iy^j \ddx$ or $x^iy^j \ddy$ the associated homomorphism in $\Hom(I_\xi,\CC[\xi])$ maps our basis of $I_\xi$ to our basis of $\CC[\xi]$ up to possible scaling. This makes it possible to understand these homomorphisms combinatorially. For example $\ddy$ acts up to scaling by decreasing the power of $y$ by 1, which on $\NN^2$ is a shift down operator, annihilating any $(i,j)$ of the form $(i,0)$ (see Figure c). The derivation $y \ddx$ acts by shifting left by 1 and shifting up by 1 (see Figure d).

More importantly, all monomials are eigenvectors for $x\ddx$ and $y\ddy$. Therefore, the homomorphisms associated to $x\ddx$ and $y\ddy$ are 0 in $\Hom(I_\xi,\CC[\xi])$, and any multiple $x^{i+1}y^j\ddx$ or $x^iy^{j+1}\ddy$ for $i,j>0$ is 0 as a homomorphism $I_\xi \to \CC[\xi]$. So the only possible nonzero homomorphisms coming from monomial derivations are of the form $y^j\ddx$ or $x^i\ddy$.

\begin{figure}
    \begin{subfigure}{0.3\textwidth}
        \centering
        \includegraphics[width=\textwidth]{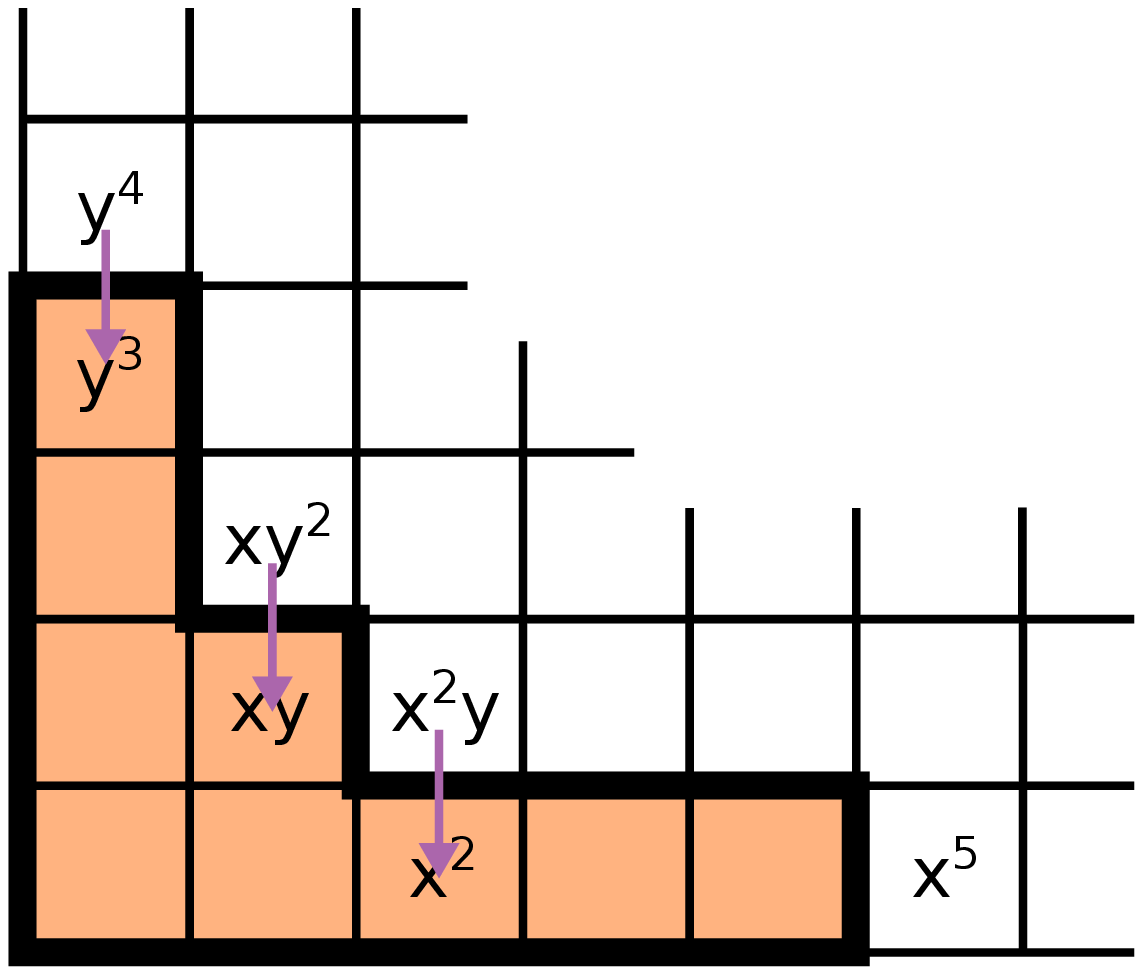}
        \caption*{Figure c. A schematic of $\ddy$ up to scaling.}
    \end{subfigure}
    \hspace{1cm}
    \begin{subfigure}{0.3\textwidth}
        \centering
        \includegraphics[width =\textwidth]{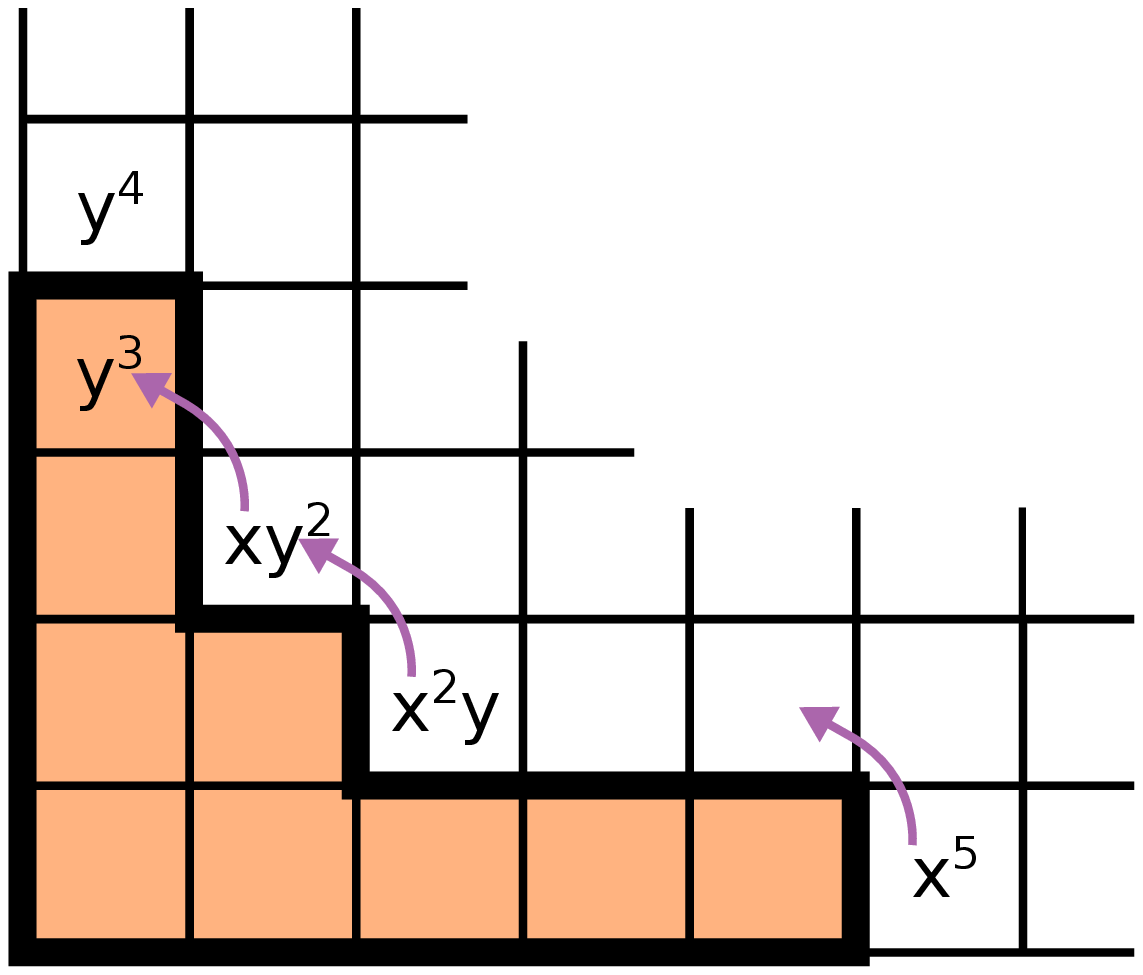}
        \caption*{Figure d. A schematic of $y\ddx$ up to scaling.}
    \end{subfigure}
\end{figure}

Finally we must determine which powers $y^j \ddx$ and $x^i \ddy$ give rise to nonzero homomorphisms. The derivation $y^j \ddx$ acts on $\NN^2$ by shifting to the left 1 and up $j$. This implies when $j \ge \mathrm{max} (\Delta v)$ then $y^j \ddx$ maps all $x^iy^j$ for $(i,j) \in \NN^2 \setminus \mu$ (the ideal) to other points in $\NN^2 \setminus \mu$ (back into the ideal). Thus the associated homomorphism in $\Hom(I_\xi,\CC[\xi])$ is 0. Likewise if $i \ge \mathrm{max}(\Delta h)$ then the derivation $x^i \ddy$ is in the kernel of $\alpha_n|_{[\xi]}$. Lastly it is clear that distinct monomial homomorphisms $y^j\ddx$ for $0\le j < \mathrm{max} (\Delta v)$ and $x^i \ddy$ for $0\le i < \mathrm{max} (\Delta h)$ give rise to linearly independent homomorphisms in $\Hom(I_\xi,\CC[\xi])$, proving $\rk(\alpha_n|_{[\xi]}) = \mathrm{max} (\Delta h) + \mathrm{max} (\Delta v)$.
\end{proof}


\section{Computing the dimension of tangent spaces at monomial subscheme of $\CC^2$}

In this section we prove equality in the \hr{MT}{Theorem A} when $\xi \subset \CC^2$ is cut out by monomials. By Remark 1 and \hr{MC}{Theorem B} it suffices to show:

\begin{proposition} If $\xi \subset \CC^2$ is a monomial subscheme then $$\rk(dh|_{[\xi]}) = \Big( \begin{array}{c}
    \textrm{maximum of horizontal}
  \\
    \textrm{steps of }\mu_\xi
    \end{array} \Big) + \Big( \begin{array}{c}
    \textrm{maximum of vertical}
  \\
    \textrm{steps of }\mu_\xi
    \end{array} \Big).$$
\end{proposition}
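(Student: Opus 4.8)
The plan is to compute $\rk(dh|_{[\xi]})$ directly in Haiman's affine chart, organizing everything by the torus action. First I would recall that the coordinate ring of $\Snc$ is generated by the polarized power sums $p_{r,s}=\sum_k x_k^r y_k^s$, and that their pullbacks are the trace functions $h^*p_{r,s}=\Tr(x^ry^s)$, where $\Tr(x^ry^s)$ is the trace of multiplication by $x^ry^s$ on the universal quotient algebra $A=\CC[x,y]/I$. Since the $dp_{r,s}$ generate $\Omega_\Snc$ and $dh$ sends $dp_{r,s}$ to $d\Tr(x^ry^s)$, we get $\rk(dh|_{[\xi]})=\dim_{\CC}(\text{span of }\{\,d\Tr(x^ry^s)|_{[\xi]}\,\})$ inside $\fm_{[\xi]}/\fm_{[\xi]}^2$. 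The torus $(\CC^*)^2$ fixes the monomial point $[\xi]$, and $\Tr(x^ry^s)$ is a weight vector of weight $(r,s)$; hence the differentials $d\Tr(x^ry^s)$ sit in distinct weight spaces, and $\rk(dh|_{[\xi]})$ equals the number of pairs $(r,s)\neq(0,0)$ for which $d\Tr(x^ry^s)\neq 0$ in $\fm_{[\xi]}/\fm_{[\xi]}^2$.

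Next I would make these differentials explicit. In the chart $U_\mu$ the standard monomials $\{x^ay^b:(a,b)\in\mu\}$ form a basis of $A$, the multiplication operators $X,Y$ are matrices whose entries are the chart coordinates $c^{(a,b)}_{(i,j)}$ (the coefficients in $x^iy^j\equiv\sum c^{(a,b)}_{(i,j)}x^ay^b$), and at $[\xi]$ they specialize to the commuting nilpotent shift operators $X_0,Y_0$. Using cyclicity of trace and $X_0Y_0=Y_0X_0$ one obtains the first-order formula
\[
d\Tr(x^ry^s)|_{[\xi]}=r\,\Tr(X_0^{r-1}Y_0^s\,dX)+s\,\Tr(X_0^rY_0^{s-1}\,dY),
\]
which unwinds into an explicit sum of the coordinate differentials $dc^{(a,b)}_{(i,j)}$ of weight $(r,s)$, i.e. those whose target $(i,j)$ lies just outside $\mu$ and whose source $(a,b)$ satisfies $(i,j)-(a,b)=(r,s)$. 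For a pure horizontal power $x^r$ this collapses to $d\Tr(x^r)|_{[\xi]}=r\sum_{q:\,\lambda_q\ge r} dc^{(\lambda_q-r,\,q)}_{(\lambda_q,\,q)}$, summed over rows $q$ of length $\lambda_q$, and symmetrically $d\Tr(y^s)$ runs over the tops of columns.

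To finish I would combine this with what is already proved. The inequality in Theorem A together with Theorem B gives the upper bound $\rk(dh|_{[\xi]})\le \rk(\alpha_n|_{[\xi]})=\max(\Delta h)+\max(\Delta v)$, so it suffices to produce that many independent differentials. The pure powers $x^r$ and $y^s$ occupy complementary weight spaces (those with trivial $y$- resp. $x$-weight), so it is enough to show that at least $\max(\Delta h)$ of the $d\Tr(x^r)$ and at least $\max(\Delta v)$ of the $d\Tr(y^s)$ are nonzero in $\fm_{[\xi]}/\fm_{[\xi]}^2$; the two families are then automatically linearly independent, and a squeeze against the upper bound forces equality. Deciding which of these differentials survive is exactly where Haiman's explicit description of the cotangent space at a monomial ideal enters: I would use it to determine which coordinates $c^{(a,b)}_{(i,j)}$ are genuine minimal generators of $\fm_{[\xi]}$ and which lie in $\fm_{[\xi]}^2$.

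The main obstacle is this last combinatorial bookkeeping. Naively the formula for $d\Tr(x^r)$ is nonzero for every $r$ up to the width $\lambda_0$ of $\mu$, which would badly overshoot the correct answer; the true content is that the relations in $\fm_{[\xi]}/\fm_{[\xi]}^2$ coming from $XY=YX$ and from the reducibility of non-minimal generators collapse these, so that a surviving differential remains only when the displacement $(r,0)$ (resp. $(0,s)$) is realized by a minimal generator, and the count of surviving $r$ (resp. $s$) turns out to equal the longest horizontal (resp. vertical) run of the staircase of $\mu$. Carrying this out cleanly — matching the trace differentials against Haiman's basis of the cotangent space and reading off $\max(\Delta h)$ and $\max(\Delta v)$ — is the crux, and the worked example $I_\xi=(y^4,xy^2,x^2y,x^5)$ is the right place to calibrate the combinatorics before proving it in general.
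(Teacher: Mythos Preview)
Your plan is correct and tracks the paper's proof closely: both compute $h^*(p_{r,s})=\Tr(x^ry^s)$ in Haiman's chart, express its class in $\fm_{[\xi]}/\fm_{[\xi]}^2$ via the coordinates $c^{r,s}_{i,j}$, and then invoke Haiman's cotangent relations to decide which classes survive. The paper writes the exact identity $\Tr(x^ry^s)=\sum_{(h,k)\in\mu} c^{r+h,s+k}_{h,k}$ and then applies the arrow relations HR1--HR3 directly; your first-order formula $d\Tr(x^ry^s)|_{[\xi]}=r\,\Tr(X_0^{r-1}Y_0^s\,dX)+s\,\Tr(X_0^rY_0^{s-1}\,dY)$ yields the same class once you observe that $dc^{r,s}_{i,j}$ vanishes whenever $(r,s)\in\mu$. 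Your two organizational choices are genuine but modest efficiencies: phrasing linear independence via torus weights is exactly the paper's observation that arrows of different length or direction are independent under HR1; and squeezing against the upper bound $\rk(dh)\le\rk(\alpha_n)$ (already available from the inequality in Theorem~A together with Theorem~B) lets you prove only the nonvanishing direction $d\Tr(x^r)\ne0$ for $1\le r\le\max(\Delta h)$, whereas the paper argues both directions from HR1--HR3, in particular using HR3 to kill all mixed $(r,s)$ with $r,s>0$ and HR1--HR2 to kill $d\Tr(x^r)$ for $r>\max(\Delta h)$. The ``main obstacle'' you flag is precisely the step the paper carries out: for $r\le\max(\Delta h)$ at least one horizontal arrow of length $r$ cannot be slid (via HR1) so that its head exits the first quadrant, hence survives in $\fm_{[\xi]}/\fm_{[\xi]}^2$.
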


Our main computational tool is Haiman's affine charts centered at $[\xi]$. We review without proof the properties of the Haiman chart that we will need and refer the interested reader to \cite[\S 2]{haiman}. If $\mu = \mu_\xi \subset \NN^2$ is the Young diagram associated to $\xi$, then the monomials
$$
\mc{B}_\mu:=\{x^i y^j \in \mu | (i,j)\in \mu\}
$$
give global sections of the rank $n$ tautological bundle $\ocn$ by pulling back and pushing forward. Moreover these sections globally generate $\ocn$ in an open neighborhood $U_\mu \subset \Hnc$ of $[\xi]$.

\begin{definition}
$U_\mu$ is the \emph{Haiman chart centered at $\xi$}.
\end{definition}

\noindent In particular the rank $n$ vector bundle $\ocn$ is trivialized on each $U_\mu$ and $\mc{B}_\mu$ gives an unordered basis for the free sheaf $\ocn|_{U_\mu}$. Note that $U_\mu$ is a $(\CC^*)^2$-invariant neighborhood of $[\xi]$ that consists of:
$$
U_\mu = \Big\{ [\chi] \in \Hnc \Big| \begin{array}{l}
    \CC[x,y]/I_\chi \textrm{ is spanned as a } \CC\textrm{-vector}
  \\
    \textrm{space by monomials in }\mc{B}_\mu
    \end{array}\Big\}.
$$
Indeed $U_\mu$ is affine \cite[Prop. 2.2]{haiman} and the ring of functions on $U_\mu$ is generated by functions $c^{r,s}_{i,j}$ (with $(r,s)$ and $(i,j)\in\NN^2$) whose value $c^{r,s}_{i,j}([\chi])$ at $[\chi] \in \Hnc$ is defined by:
\begin{equation*}
\label{eq_1}
x^ry^s = \sum_{(i,j) \in \mu} c^{r,s}_{i,j}([\chi]) x^iy^j \mod I_\chi. 
\end{equation*}
It is convenient to depict $c^{r,s}_{i,j}$ by an arrow pointing from $(r,s)$ to $(i,j)$.

Denoting the maximal ideal $\fm_{[\xi]} \subset \Oc_{U_\mu}$ of $[\xi] \in U_\mu$, then the cotangent space $\fm_{[\xi]}/\fm_{[\xi]}^2$ is generated by classes of functions $c^{r,s}_{i,j}$ corresponding to arrows with heads in $\mu$ and tails in $\NN^2\setminus \mu$. We now state the key \emph{Haiman relations} for these arrows modulo $\fm_{[\xi]}^2$:
\begin{itemize}
\item[HR1] see \cite[eqn. 2.18]{haiman}. Translating an arrow horizontally or vertically does not change the class it represents modulo $\fm_{[\xi]}^2$ provided the head remains in $\mu$ and the tail remains outside of $\mu$.
\item[HR2] see \cite[eqn. 2.18]{haiman}. If an arrow can be translated so that its head crosses the $x$-axis or the $y$-axis and the tail remains in $\NN^2 \setminus \mu$, then it represents a class that vanishes modulo $\fm_{[\xi]}^2$.
\item[HR3] see \cite[p. 211]{haiman}. Any strictly southwest pointing arrow vanishes modulo $\fm_{[\xi]}^2$.
\end{itemize}
The set of equivalence classes of nonvanishing arrows under the Haiman relations forms a basis for the cotangent space (see Figure e and Figure f for examples of these relations).

\begin{figure}[H]
    \centering
    \begin{subfigure}{0.4\textwidth}
        \centering
        \includegraphics[width=.6\textwidth]{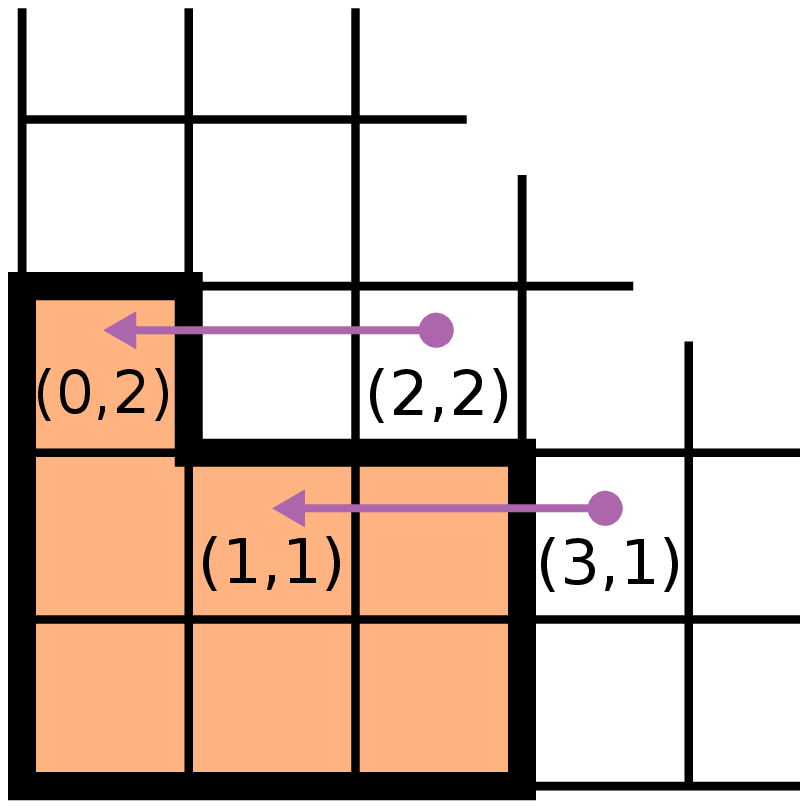}
        \caption*{Figure e. In $\fm_{[\xi]}/\fm_{[\xi]}^2$ we have $c^{22}_{02}\ne c^{31}_{11}$ although the arrows have the same slope.}
    \end{subfigure}
    \hspace{1.5cm}
    \begin{subfigure}{0.45\textwidth}
        \centering
        \includegraphics[width=.6\textwidth]{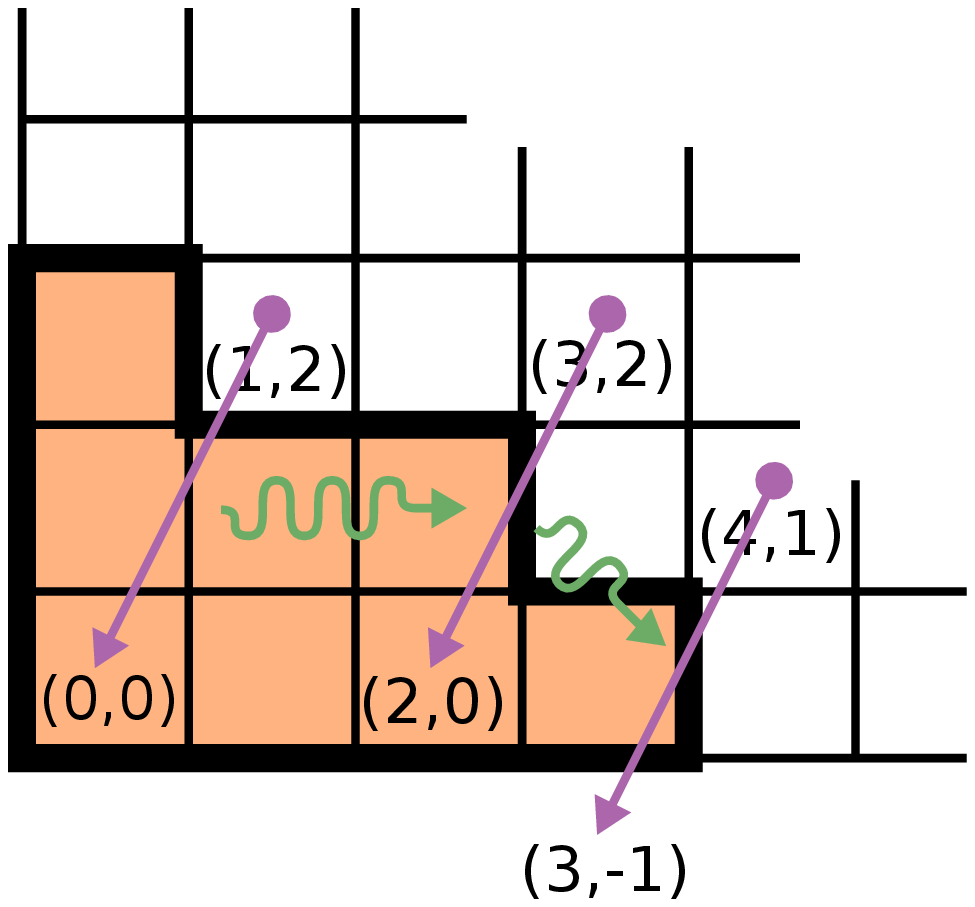}
        \caption*{Figure f. We have the Haiman relations: $c^{12}_{00} = c^{32}_{20} = c^{41}_{3,-1} = 0$ in $\fm_{[\xi]}/\fm_{[\xi]}^2$, verifying HR3.}
    \end{subfigure}
\end{figure}

Let $R = \mb{C}[x_1,\ldots,x_n,y_1,\ldots,y_n]^{\SymG}$ be the coordinate ring of $\Snc = (\CC^2)^n/\SymG$. This ring is generated by the \emph{polarized power sums} \cite{Weyl}:
$$
p_{r,s} = \sum_{i = 1}^n x_i^ry_i^s.
$$
We can describe the pullback of the functions $p_{r,s}$ along $h$ \cite[p. 208]{haiman} as:
$$
h^*(p_{r,s}) = \Tr(x^ry^s:\ocn \ra \ocn)
$$
where $x^ry^s$ is viewed as an endomorphism of $\CC[x,y]/I_\xi$ for $[\xi] \in \Hnc$. Thus $dh|_{[\xi]}$ is the  map on cotangent spaces induced by $h^*$ and its image is spanned by the classes $\Tr(x^ry^s) \mod \fm_{[\xi]}^2$. 

\begin{proof}[Proof of Proposition 6] 

We need to compute the derivative of $\Tr(x^ry^s)$ in $\fm_{[\xi]}/\fm_{[\xi]}^2$. For all $[\chi] \in U_\mu$ we can write $x^r y^s \in \operatorname{End}(\CC[x,y]/I_\chi)$ as a matrix using the basis $\mc{B}_\mu$. Thus we compute the trace:
$$
\Tr(x^ry^s) = \sum_{(h,k) \in \mu} c^{r + h, s+k}_{h,k}
$$ 
as an element of $H^0(U_\mu,\Oc_{U_\mu})$. By the discussion proceeding the proof, the image of $dh|_{[\xi]}$ is generated by 
$$
d(h^*(p_{r,s})) = d \Tr(x^ry^s) \equiv  \sum_{(h,k) \in \mu} c^{r + h, s+k}_{h,k} \mod \fm_{[\xi]}^2.
$$

Using the description of the cotangent space as linear combinations of equivalence classes of arrows on the Young diagram $\mu$, $d(h^*(p_{r,s}))$ is a sum of arrows of slope $s/r$. Whenever both $s$ and $r$ are nonzero, then these arrows are pointing southwest and so by (HR3) they vanish modulo $\fm_{[\xi]}^2$. 

\begin{figure}
    \centering
    \begin{subfigure}{0.45\textwidth}
        \centering
        \includegraphics[width=.85\textwidth]{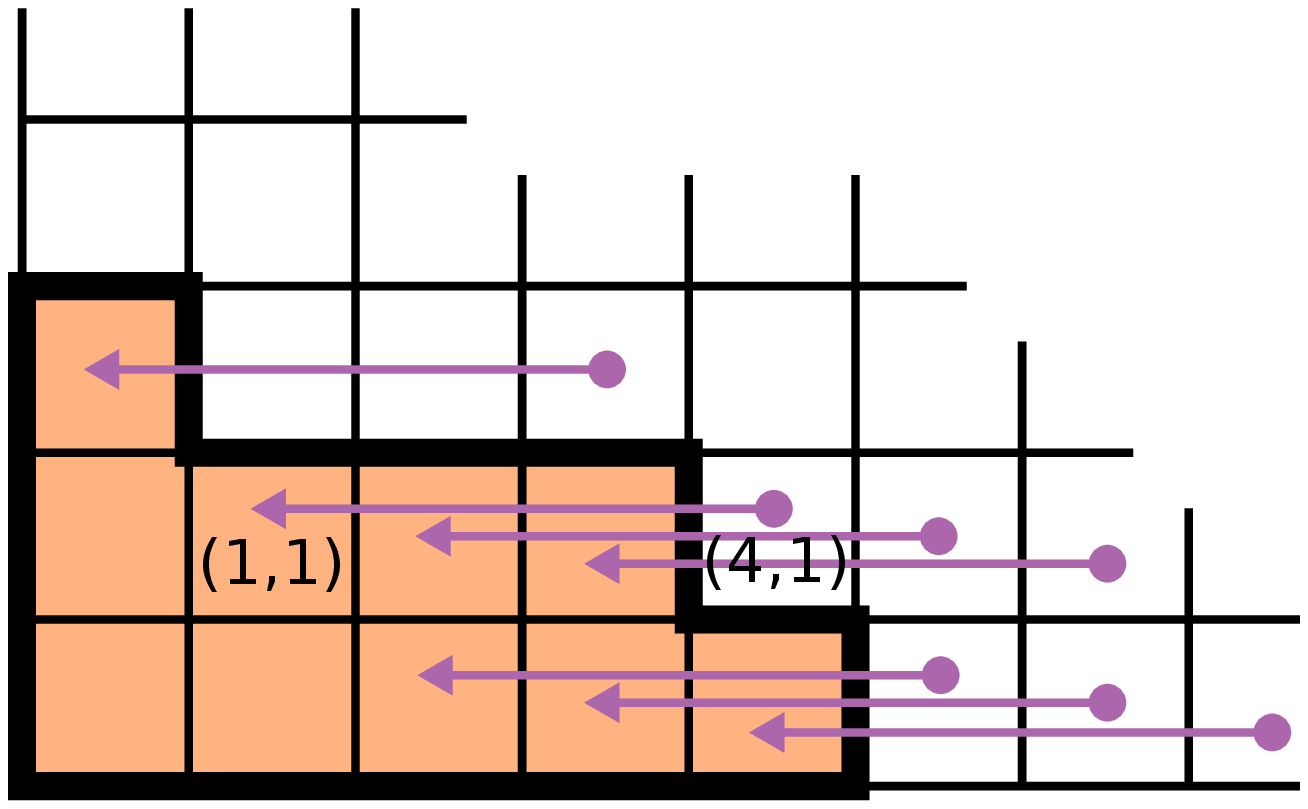}
        \caption*{Figure g. These arrows depict $d(h^*(p_{3,0}))$ modulo $\fm_{[\xi]}^2$. Applying (HR1) we have $d(h^*(p_{3,0}))=6c^{41}_{11}\ne 0$.}
    \end{subfigure}
    \hspace{1cm}
    \begin{subfigure}{0.45\textwidth}
        \centering
        \includegraphics[width=.9\textwidth]{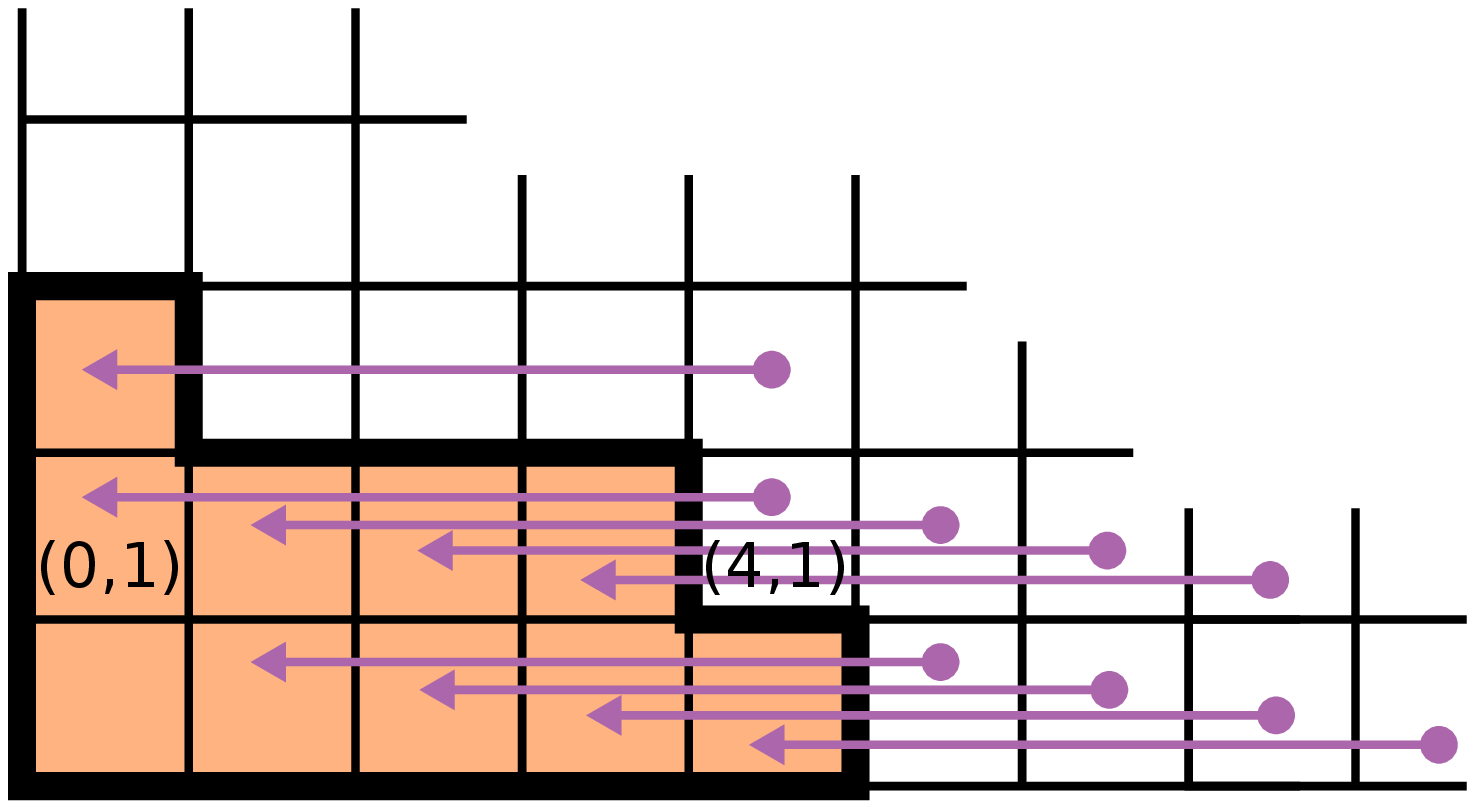}
        \caption*{Figure h. These arrows depict $d(h^*(p_{4,0}))$ modulo $\fm_{[\xi]}^2$. By applying (HR1) and (HR2) to shift up and to the left we see $d(h^*(p_{4,0}))=0$.}
    \end{subfigure}
\end{figure}

When $s = 0$, $d(h^*(p_{r,0}))$ is a sum of horizontal arrows of length $r$. If $r > \max(\Delta h)$, then by (HR1) we can slide each horizontal arrow up and to the right until the head of the arrow leaves the first quadrant (see Figures g and h). Therefore by (HR2), $d(h^*(p_{r,0})) = 0 \mod \fm_{[\xi]}^2$. For $1 \le r \le \max(\Delta h)$, we get that at least one of these arrows is nonzero since we cannot slide any arrow of length $r$ past the max horizontal jump in the diagram while still keeping the head in $\mu$. By the same argument we see that $d(h^*(p_{0,s}))$ is a sum of vertical arrows of length $s$ and is nonzero if and only if $1 \le s \le \max(\Delta v)$. 

Since translation preserves both the length and direction of an arrow, the set 
$$
\{d(h^*(p_{r,0})) \ : \ 1 \le r \le \max(\Delta h)\} \cup \{d(h^*(p_{0,s})) \ : \ 1 \le s \le \max(\Delta v)\}
$$ 
is a linearly independent set of size $\max(\Delta v) + \max(\Delta h)$ which generates $\im(dh|_{[\xi]}) \subset \fm_{[\xi]}/\fm_{[\xi]}^2$ and it follows that
$\rk(dh|_{[\xi]}) = \max(\Delta h) + \max(\Delta v) = \rk(\alpha_n|_{[\xi]}).$
\end{proof}

\begin{remark} Theorem A gives a lower bound on the tangent space dimension of $P_n$. On the other hand, we can obtain upper bounds by taking torus degenerations. In particular, $$2n - \rk(\alpha_n|_{[\xi]}) \le \dim T_{[\xi]}P_n \le \min\{\dim(T_{[\chi]}P_n) \ : [\xi] \text{ degenerates to } [\chi]\}
$$ where $I_\chi$ is a monomial ideal.
\end{remark}

\begin{remark} In fact Proposition $6$ holds more generally for \textit{formally monomial} subschemes, that is, $\xi$ such that there exist formal coordinates around $0 \in \CC^2$ for which $I_\xi$ is a monomial ideal. 

\end{remark}

Recall that a subscheme $\xi \subset \CC^2$ is \textit{curvilinear} if it is contained in a smooth curve. 

\begin{corollary}
Let $\xi$ be a monomial subscheme. Then $[\xi] \in P_n$ is a smooth point if and only if $\xi$ is a curvilinear subscheme.
\end{corollary}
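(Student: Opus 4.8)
The plan is to turn the smoothness question into the numerical identity supplied by Theorems A and B, and then to read that identity off from the shape of $\mu_\xi$.

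First I would recall that $P_n$ is irreducible of dimension $n-1$ (Brian\c{c}on), so that $[\xi]$ is a smooth point of $P_n$ exactly when $\dim T_{[\xi]}P_n = n-1$. Since $\xi$ is a monomial subscheme, \hr{MT}{Theorem A} gives the equality $\dim T_{[\xi]}P_n = 2n - \rk(\alpha_n|_{[\xi]})$, and \hr{MC}{Theorem B} evaluates $\rk(\alpha_n|_{[\xi]}) = \max(\Delta h) + \max(\Delta v)$. Hence $[\xi]$ is smooth if and only if $\max(\Delta h) + \max(\Delta v) = n+1$, and the whole statement is reduced to a combinatorial fact about the Young diagram $\mu_\xi$.

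Next I would identify the curvilinear monomial subschemes combinatorially. A monomial subscheme is curvilinear if and only if its local ring $\CC[x,y]/I_\xi$ has embedding dimension at most $1$, which for a monomial ideal means $x \in I_\xi$ or $y \in I_\xi$. Because $\mu_\xi$ is downward closed in $\NN^2$, this happens exactly when $\mu_\xi$ is a single row ($j=0$) or a single column ($i=0$). For such diagrams a direct trace of the perimeter gives $(\Delta h,\Delta v) = ((n),(1))$ or $((1),(n))$, so $\max(\Delta h)+\max(\Delta v) = n+1$ in both cases; this proves that curvilinear monomial points are smooth.

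It remains to prove the converse, namely that among all size-$n$ diagrams the equality $\max(\Delta h)+\max(\Delta v) = n+1$ forces a single row or column. Let $w$ and $\ell$ be the width and height of $\mu_\xi$. The horizontal (resp. vertical) steps partition the total horizontal (resp. vertical) extent into maximal runs, so $\max(\Delta h) \le w$ and $\max(\Delta v) \le \ell$; moreover $\mu_\xi$ contains its first row and first column, which meet only in the corner box, so $n \ge w+\ell-1$. Combining these yields $\max(\Delta h)+\max(\Delta v) \le w+\ell \le n+1$, and equality forces both $\max(\Delta h) = w$, $\max(\Delta v) = \ell$ and $n = w+\ell-1$. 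The main obstacle is the equality analysis: a full-width horizontal step $\max(\Delta h)=w$ means there is a single run of equal-height columns, so $\mu_\xi$ is a $w\times\ell$ rectangle with $n=w\ell$, and then $w\ell = w+\ell-1$ collapses to $(w-1)(\ell-1)=0$, i.e. a single row or a single column. Since single row and single column are exactly the curvilinear monomial subschemes, this completes the equivalence.
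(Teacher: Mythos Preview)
Your proof is correct and follows the same overall strategy as the paper: reduce smoothness to the identity $\max(\Delta h)+\max(\Delta v)=n+1$ via Theorems~A and~B, identify curvilinear monomial subschemes with single rows/columns, and then argue combinatorially. The forward direction is identical to the paper's.

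For the converse, both arguments begin with the chain $\max(\Delta h)+\max(\Delta v)\le w+\ell\le n+1$ (the paper writes $a,b$ for your $w,\ell$). The paper then splits into cases: if $\mu_\xi$ is a hook one has $w+\ell=n+1$ but the step sequences $(1,\alpha)$ and $(\beta,1)$ force $\max(\Delta h)+\max(\Delta v)=\alpha+\beta<n+1$; if $\mu_\xi$ is not a hook then already $w+\ell<n+1$. You instead analyze equality directly: $\max(\Delta h)=w$ forces a single horizontal step, hence all columns have the same height and $\mu_\xi$ is a $w\times\ell$ rectangle, whence $n=w\ell$ together with $n=w+\ell-1$ gives $(w-1)(\ell-1)=0$. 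Your equality analysis is a bit slicker, sidestepping the hook case split; the paper's version has the mild advantage of making explicit which near-extremal shapes (hooks) come closest to smoothness.
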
 

\begin{proof} A monomial subscheme $\xi$ is curvilinear if and only if $\mu$ is either a single row or a single column of $n$ blocks. In each of these cases one easily sees that $\max(\Delta h) + \max(\Delta v) = n+1$ so that $\dim T_{[\xi]}P_n = n - 1$ is the dimension of $P_n$. 

Suppose $\xi$ is not curvilinear. Let $a$ and $b$ be the length of the largest row and largest column of $\mu$ respectively. Then $a + b \le n + 1$ with equality if and only if $\mu$ is a hook. If $\mu$ is a hook, then $\mu$ must have horizontal step sequence $\Delta h = (1,\alpha)$ and vertical step sequence $\Delta v = (\beta,1)$. Since the horizontal steps add up to $a$ and the vertical steps add up to $b$, it follows that 
$$
\max(\Delta h) + \max(\Delta v) = \alpha + \beta < \alpha + \beta + 2 = a + b = n+1.
$$
On the other hand, if $\mu$ is not a hook, then $\max(\Delta h) + \max(\Delta v) \le a + b < n + 1$. In either case, $\rk(\alpha_n|_{[\xi]}) < n + 1$ so that $\dim T_{[\xi]}P_n = 2n - \rk(\alpha_n|_{[\xi]}) > n - 1 = \dim P_n$.

\end{proof}

It also follows from Theorem A that the maximally singular points of $P_n$ are precisely the $k^{th}$ order neighborhoods of the origin.

\begin{corollary} If $\dim T_{[\xi]}P_n = 2n - 2$ then $I_{\xi} = \fm^k$ where $\fm$ is the maximal ideal of $0 \in \CC^2$. \end{corollary}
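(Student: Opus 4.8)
The plan is to reduce the statement to a combinatorial fact about the boundary of $\mu_\xi$ and then to bootstrap from the monomial case to an arbitrary $\xi$ by degeneration. First I would dispose of the case in which $I_\xi$ is a monomial ideal. Combining the equality in Theorem A with Theorem B gives $\dim T_{[\xi]}P_n = 2n - \big(\max(\Delta h) + \max(\Delta v)\big)$, so the hypothesis $\dim T_{[\xi]}P_n = 2n-2$ is equivalent to $\max(\Delta h) + \max(\Delta v) = 2$. Every entry of $\Delta h$ and of $\Delta v$ is a positive integer, and for $n \ge 1$ both step sequences are nonempty (the perimeter of $\mu_\xi$ has at least one horizontal and one vertical step), so each maximum is at least $1$. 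Hence the hypothesis forces $\max(\Delta h) = \max(\Delta v) = 1$, i.e. every horizontal and every vertical step of the top perimeter of $\mu_\xi$ has length $1$.

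Next I would verify that this condition characterizes the staircase. Writing $c_0 \ge c_1 \ge \cdots$ for the column heights of $\mu_\xi$, the vertical steps are exactly the successive drops between distinct heights (ending with the drop to the $x$-axis), while the horizontal steps are the lengths of the runs of columns of equal height. Thus $\max(\Delta v) = 1$ says the heights run through every value from $c_0$ down to $1$, and $\max(\Delta h) = 1$ says each height is attained by a single column. So $\mu_\xi$ has exactly $k := c_0$ columns, of heights $k, k-1, \dots, 1$, that is $\mu_\xi = \{(i,j) \in \NN^2 : i + j \le k-1\}$, which is precisely the Young diagram of $\fm^k$. Hence $I_\xi = \fm^k$ in the monomial case (and necessarily $n = \binom{k+1}{2}$). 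The same computation shows $\dim T_{[\chi]}P_n \le 2n-2$ for \emph{every} monomial subscheme $\chi$, with equality if and only if $I_\chi = \fm^k$.

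Finally I would remove the monomiality hypothesis using Remark 9. Given an arbitrary $\xi$ with $\dim T_{[\xi]}P_n = 2n-2$, pick a torus (Gr\"obner) degeneration of $[\xi]$ to a monomial subscheme $[\chi]$. Remark 9 gives $2n-2 = \dim T_{[\xi]}P_n \le \dim T_{[\chi]}P_n$, while the previous paragraph gives the reverse inequality; therefore $\dim T_{[\chi]}P_n = 2n-2$ and so $I_\chi = \fm^k$. Since $\fm^k = (x,y)^k$ is then the initial ideal of $I_\xi$ for a degree-compatible order, every nonzero $f \in I_\xi$ has $\ord(f) \ge k$, whence $I_\xi \subseteq \fm^k$; as $I_\xi$ and $\fm^k$ have the same colength $n$, the surjection $\CC[x,y]/I_\xi \twoheadrightarrow \CC[x,y]/\fm^k$ is an isomorphism and $I_\xi = \fm^k$.

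The combinatorial heart (the first two paragraphs) is routine once Theorems A and B are available. I expect the main obstacle to be the last paragraph: making precise that the torus degeneration lands on a monomial ideal supported at the origin and that the equality $I_\chi = \fm^k$ propagates back to the order estimate $I_\xi \subseteq \fm^k$. If one only wants the statement for monomial $\xi$, the first two paragraphs suffice and this difficulty disappears.
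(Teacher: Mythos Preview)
Your first two paragraphs are correct and essentially match what the paper uses implicitly: the equality case of Theorem~A together with Theorem~B forces $\max(\Delta h)=\max(\Delta v)=1$, and the only Young diagram with all steps of length $1$ is the staircase, i.e.\ $I_\chi=\fm^k$.

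The third paragraph, however, has a real gap, precisely at the step you flagged. The implication ``$\fm^k$ is the initial ideal of $I_\xi$ for a degree-compatible order $\Rightarrow$ every $f\in I_\xi$ has $\ord(f)\ge k$'' is false as stated. For a global degree-compatible term order (degrevlex, say) the leading monomial of $f$ is a term of \emph{maximal} degree in $f$, so $\mathrm{in}_>(I_\xi)=\fm^k$ only tells you each $f$ has some term of degree $\ge k$, not that its lowest-degree term does. Concretely, $I_\xi=(x^2+y,\,xy,\,y^2)$ has degrevlex initial ideal $\fm^2$, yet $x^2+y\in I_\xi$ has order $1$ and $I_\xi\not\subseteq\fm^2$. (This $\xi$ is curvilinear, so the hypothesis of the corollary fails for it; the point is that your implication is being invoked as a general step, and it is that step which breaks.) A torus degeneration with generic weight is not degree-compatible either, so simply citing Remark~8 does not rescue this. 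The argument \emph{can} be repaired: take the one-parameter subgroup of weight $(N,N{+}1)$ with $N\gg 0$ and pass to the $t\to\infty$ limit, so that the surviving term of each $f$ is one of \emph{minimal} degree; or first degenerate to the tangent cone and then to a monomial ideal. Either way one gets $I_\xi\subseteq\fm^k$ and then equality by colength.

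The paper sidesteps this entirely by a different trick. It chooses a generic one-parameter subgroup $\sigma$ and looks at \emph{both} limits $[\chi]=\lim_{t\to 0}\sigma(t)\cdot[\xi]$ and $[\zeta]=\lim_{t\to\infty}\sigma(t)\cdot[\xi]$ inside the proper scheme $P_n$. Semicontinuity forces both to satisfy $\rk(\alpha_n)\le 2$, hence both equal $[\fm^k]$ by your combinatorial step. Since $U_{\fm^k}$ is a torus-invariant open affine containing its unique monomial fixed point $[\fm^k]$, the whole orbit closure---a $\PP^1$---must land in $U_{\fm^k}$; a map from $\PP^1$ to an affine variety is constant, so $[\xi]=[\fm^k]$. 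This avoids any comparison between $I_\xi$ and an initial ideal.
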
 

\begin{proof} We have an action of $(\CC^*)^2$ on $\Hnc$ with fixed points corresponding to monomial subschemes so that $P_n$ is invariant. Consider a 1-parameter subgroup $\sigma : \CC^* \to (\CC^*)^2$ which acts on $\Hnc$ with the same fixed points. The limits
$$
\lim_{t\to0}\sigma(t)\cdot[\xi] = [\chi] \enspace \text{ and } \enspace  \lim_{t\to\infty}\sigma(t)\cdot[\xi] = [\zeta],
$$
exist by properness of $P_n$ and they are monomial subschemes. Then by Remark 8 we have:
$$
2n-2 =\dim T_{[\xi]}P_n \le \dim T_{[\chi]}P_n \enspace \text{ and } \enspace 2n-2 =\dim T_{[\xi]}P_n \le \dim T_{[\zeta]}P_n.
$$
Thus $\rk(\alpha_n|_{[\chi]}) = \rk(\alpha_n|_{[\zeta]}) \le 2$. This is only possible if the Young diagrams $\mu_\chi$ and $\mu_\zeta$ are staircases, that is $I_\chi = I_\zeta = \fm^k$ and $\zeta = \chi$. Thus both the degenerations occur in the Haiman chart $U_\chi$ so degeneration gives a map from $\PP^1$ to $U_\chi$. Since $U_\chi$ is affine, the map is constant and $I_\xi = I_\chi = \fm^k$.
\end{proof}

\begin{corollary} If $xy \in I_{\xi}$ then $\dim T_{[\xi]}P_n \le n+1$.
\end{corollary}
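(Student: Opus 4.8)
The plan is to combine the degeneration upper bound of Remark 8 with the explicit rank computation of Theorem B. By Remark 8, for any monomial subscheme $[\chi]$ to which $[\xi]$ degenerates under the torus action we have $\dim T_{[\xi]}P_n \le \dim T_{[\chi]}P_n$, so it suffices to produce one such $\chi$ with $\dim T_{[\chi]}P_n \le n+1$. First I would choose a generic one-parameter subgroup $\sigma:\CC^* \to (\CC^*)^2$ as in the proof of Corollary 10, whose fixed points on $\Hnc$ are exactly the monomial subschemes, and set $[\chi] = \lim_{t\to 0}\sigma(t)\cdot[\xi]$; this limit exists by properness of $P_n$ and is a monomial subscheme.

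The crux is to check that the limit still satisfies $xy \in I_\chi$. For this I would observe that $\{[\chi] \in P_n : xy \in I_\chi\}$ is a closed, $(\CC^*)^2$-invariant subscheme of $P_n$: it is the vanishing locus of the global section of $\ocn$ determined by the monomial $xy$ (obtained, as in Section III, by pulling back $xy$ to $\Zcaln$ and pushing forward), since $s([\chi]) = xy \bmod I_\chi$ in the fiber $\Oc_\chi$. Because $xy$ is a $(\CC^*)^2$-eigenvector, this section is semi-invariant and its zero locus is torus-invariant. As $[\xi]$ lies in this closed invariant locus, so does its limit $[\chi]$, giving $xy \in I_\chi$.

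Next I would translate $xy \in I_\chi$ into the shape of the Young diagram. Since $\mu_\chi$ is down-closed and $(1,1)\notin\mu_\chi$ (as $xy\in I_\chi$), every box of $\mu_\chi$ has a zero coordinate; that is, $\mu_\chi$ is a hook of shape $(a,1^{b-1})$ with $a+b-1=n$ (possibly a single row or column). For such hooks the computation in the proof of Corollary 9 gives $\Delta h = (1,a-1)$ and $\Delta v = (b-1,1)$ when $a,b\ge 2$, so by Theorem B
$$
\rk(\alpha_n|_{[\chi]}) = \mathrm{max}(\Delta h) + \mathrm{max}(\Delta v) = (a-1)+(b-1) = n-1;
$$
in the degenerate single-row and single-column cases $\chi$ is curvilinear and $\rk(\alpha_n|_{[\chi]}) = n+1$. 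In either case Theorem A yields $\dim T_{[\chi]}P_n = 2n - \rk(\alpha_n|_{[\chi]}) \le n+1$, and the bound for $\xi$ then follows from Remark 8.

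I expect the middle step to be the main obstacle: verifying that the torus degeneration keeps us inside the locus $xy \in I$. The rest is routine bookkeeping with the hook shape together with the appeals to Theorems A and B, but one must genuinely exhibit ``$xy \in I$'' as a closed condition (as the vanishing of a semi-invariant section of the tautological bundle) so that it is inherited by the flat limit; otherwise the degeneration could leave the hook locus and the bound would fail.
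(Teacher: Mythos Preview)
Your proof is correct and follows the same overall strategy as the paper: degenerate $[\xi]$ to a monomial subscheme $[\chi]$, argue that $\mu_\chi$ must be a hook, compute $\dim T_{[\chi]}P_n\in\{n-1,n+1\}$ via Theorems~A and~B, and conclude by Remark~8.

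The one point of difference is the justification that the limit $\chi$ still has $xy\in I_\chi$. You argue that $\{[\chi]:xy\in I_\chi\}$ is the zero locus of the semi-invariant global section $xy$ of $\ocn$, hence closed and torus-invariant, so it contains the limit. The paper instead observes that if $xy\in I_\xi$ then $[\xi]$ can lie only in Haiman charts $U_\mu$ with $\mu$ a hook (since $(1,1)\in\mu$ would force $xy$ to be part of a basis of $\CC[\xi]$), and any torus limit of $[\xi]$ must lie in one of these charts because $U_{\mu_\chi}$ is open and $(\CC^*)^2$-invariant. Both arguments are short and valid; yours is a bit more self-contained (it avoids invoking the Haiman chart description for this step), while the paper's is natural given that the charts are already on the table from \S III.
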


\begin{proof}
The fact that $xy \in I_\xi$ implies the only Haiman charts that contain $[\xi]$ correspond to Young diagrams which are hooks. Therefore $\xi$ can only degenerate to monomial schemes with hooks for Young diagrams. An easy computation using the \hr{MT}{Theorem A} and the \hr{MC}{Theorem B} shows that if $\chi\subset \CC^2$ is a monomial subscheme with a hook for a Young diagram then $\dim T_{[\chi]}P_n = n-1$ or $n+1$. Then we are done by Remark 8.
\end{proof}

\bibliographystyle{amsalpha} 
\bibliography{tangent}

\end{document}